\documentclass{elsarticle}
\usepackage{geometry}                
\geometry{letterpaper}                   
\usepackage{graphicx}
\usepackage[reqno]{amsmath}
\usepackage{amssymb}
\let\vec\relax 
\usepackage{epstopdf}
\usepackage{amsthm}
\DeclareGraphicsRule{.tif}{png}{.png}{`convert #1 `dirname #1`/`basename #1 .tif`.png}
\let\vec\relax 
\usepackage[]{graphicx}               
\usepackage{bm}
\usepackage{esvect}
\usepackage{ulem}
\usepackage{adjustbox}
\usepackage{accents}
\usepackage{stmaryrd}
\usepackage{xcolor}
\usepackage{enumitem}
\usepackage{hyperref}
\usepackage{booktabs}
\usepackage{enumitem}

\definecolor{chcol}{rgb}{0.4,0.,0.9}

\newcommand{\halfComma}{\kern 0.083334em}

\newcommand{\avg}[1]{\left\{\hspace*{-3pt}\left\{#1\right\}\hspace*{-3pt}\right\}}
\newcommand{\jump}[1]{\ensuremath{\left\llbracket #1 \right\rrbracket}}
\newcommand{\dS}{{\,\operatorname{dS}}}         
\newcommand\iprod[1]{\left\langle #1\right\rangle}                                             
\newcommand\inorm[1]{\left |\left| #1\right|\right|}                                               
\renewcommand\vec[1]{\accentset{\,\rightarrow}{#1}}                              
\newcommand\spacevec[1]{\accentset{\,\rightarrow}{#1}}                        
\newcommand\statevec[1]{\mathbf #1}                                                     
\newcommand\statevecGreek[1]{\boldsymbol #1}                                     
\newcommand\acclrvec[1]{\accentset{\,\leftrightarrow}{#1}}                      
\newcommand\bigstatevec[1]{\acclrvec{{\mathbf #1}}}                              
\newcommand\vecNabla{\accentset{\,\rightarrow}\nabla}                         
\newcommand\vecNablaX{\accentset{\,\rightarrow}\nabla_{\!x}}              
\newcommand\mmatrix[1]{\underbar{#1}}				


%


%
%

%
%
\newcommand\oneHalf{\frac{1}{2}}

\usepackage{xargs}    
\usepackage[colorinlistoftodos,prependcaption,textsize=tiny]{todonotes}
\newcommandx{\unsure}[2][1=]{\todo[linecolor=blue,backgroundcolor=blue!25,bordercolor=blue,#1]{#2}}
\newcommandx{\changeThis}[2][1=]{\todo[linecolor=red,backgroundcolor=red!25,bordercolor=red,#1]{#2}}

\newtheorem{thm}{Theorem}

\newtheorem{rem}{Remark}

\parskip 0.1in


\begin{document}
\begin{frontmatter}
\title{On the Theoretical Foundation of Overset Grid Methods for Hyperbolic Problems II: Entropy Bounded Formulations for Nonlinear Conservation Laws}

\author[1]{David A. Kopriva}
\ead{kopriva@math.fsu.edu}
\address[1]{Department of Mathematics, Florida State University, Tallahassee, FL 32306, USA and Computational Science Research Center, San Diego State University, San Diego, CA 92182, USA.}
\author[2]{Gregor J. Gassner}
\ead{ggassner@uni-koeln.de}
\address[2]{Department for Mathematics and Computer Science; Center for Data and Simulation Science,
 University of Cologne, Weyertal 86-90, 50931, Cologne, Germany.}
\author[3]{Jan  Nordstr\"om\corref{cor3}}
\ead{jan.nordstrom@liu.se}
\address[3]{Department of Mathematics, Applied Mathematics, Linköping University, 581 83 Linköping, Sweden and Department of Mathematics and Applied Mathematics
 University of Johannesburg
 P.O. Box 524, Auckland Park 2006, South Africa.}
 \cortext[cor3]{Corresponding author}
\begin{abstract}
We derive entropy conserving and entropy dissipative overlapping domain formulations for systems of nonlinear hyperbolic equations in conservation form, such as would be approximated by overset mesh methods. The entropy conserving formulation imposes a two-way coupling at the artificial interface boundaries through nonlinear penalty functions that vanish when the solutions coincide. The penalty functions are expressed in terms of entropy conserving fluxes originally introduced for finite volume schemes. \textcolor{black}{In addition to the interface coupling, which is required, entropy dissipation and coupling can optionally be added through the use of linear penalties within the overlap region.}
\end{abstract}

\begin{keyword}
Overset Grids, Chimera Method, Entropy Stability, Conservation, Penalty Methods
\end{keyword}
\end{frontmatter}

\section{Introduction}

Overset grid methods \cite{Chan99},\cite{Meakin:1999io} are used to simplify the mesh generation problem for complex geometries by overlapping simpler--to--mesh domains to cover the original more complex domain. An example is shown in Fig. \ref{fig:ChimeraGridExpand}. Overset grid methods trade the complexity of meshing a complex domain for the complexity of coupling the overlapping grids. In addition to the physical boundaries, $\Gamma_a$ and $\Gamma_d$ in Fig.  \ref{fig:ChimeraGridExpand}, artificial interior boundaries, $\Gamma_b$ and $\Gamma_c$, are introduced that will need boundary conditions applied to them. \textcolor{black}{Essentially, two ways to couple the domains are found in the literature:  boundary coupling and volume coupling. Boundary coupling typically includes interpolation of solution or characteristic data from the base to the overset grid along $\Gamma_c$, and similarly from the overset grid to the base grid along $\Gamma_b$ \cite{Bodony:2011cl},\cite{STEGER1987301}. Volume coupling transfers the solution from one grid to another, for example through interpolation, see e.g. \cite{brazell2016overset}}.


Overset grids have been used with all the major approximation methods, finite difference, finite element, finite volume and spectral element  \cite{STEGER1987301},\cite{Hansbo:2003eu}\cite{GALBRAITH201427},\cite{brazell2016overset},\cite{Kauffman:2017uj},\cite{OSTI},\cite{Kopriva:1989},\cite{Merrill:2016qj}. No matter how the problems are approximated, however, the approximations must be consistent with \textcolor{black}{and converge to} the original single domain PDE problem. 

Unlike the original single domain problem, the overset grid methods produce multiple solutions in the overlap regions. Two fundamental questions then arise: 
\begin{enumerate}
\item Is the solution of the overset domain problem the same as the original, single domain problem?
\item Is the overset domain problem well-posed (or bounded, if nonlinear)? 
\end{enumerate}
\textcolor{black}{It is imperative that any approximation, including overset grid approximations, be put on a firm theoretical foundation and approximate a meaningful PDE problem equivalent to the original one.} If the PDE problems being approximated are not well-posed or bounded, then the question of stability of the numerical schemes becomes moot. If they do not have the same solution as the original problem, they are not consistent, and hence meaningless.
\begin{figure}[tbp] 
   \centering
   \includegraphics[width=5.5in]{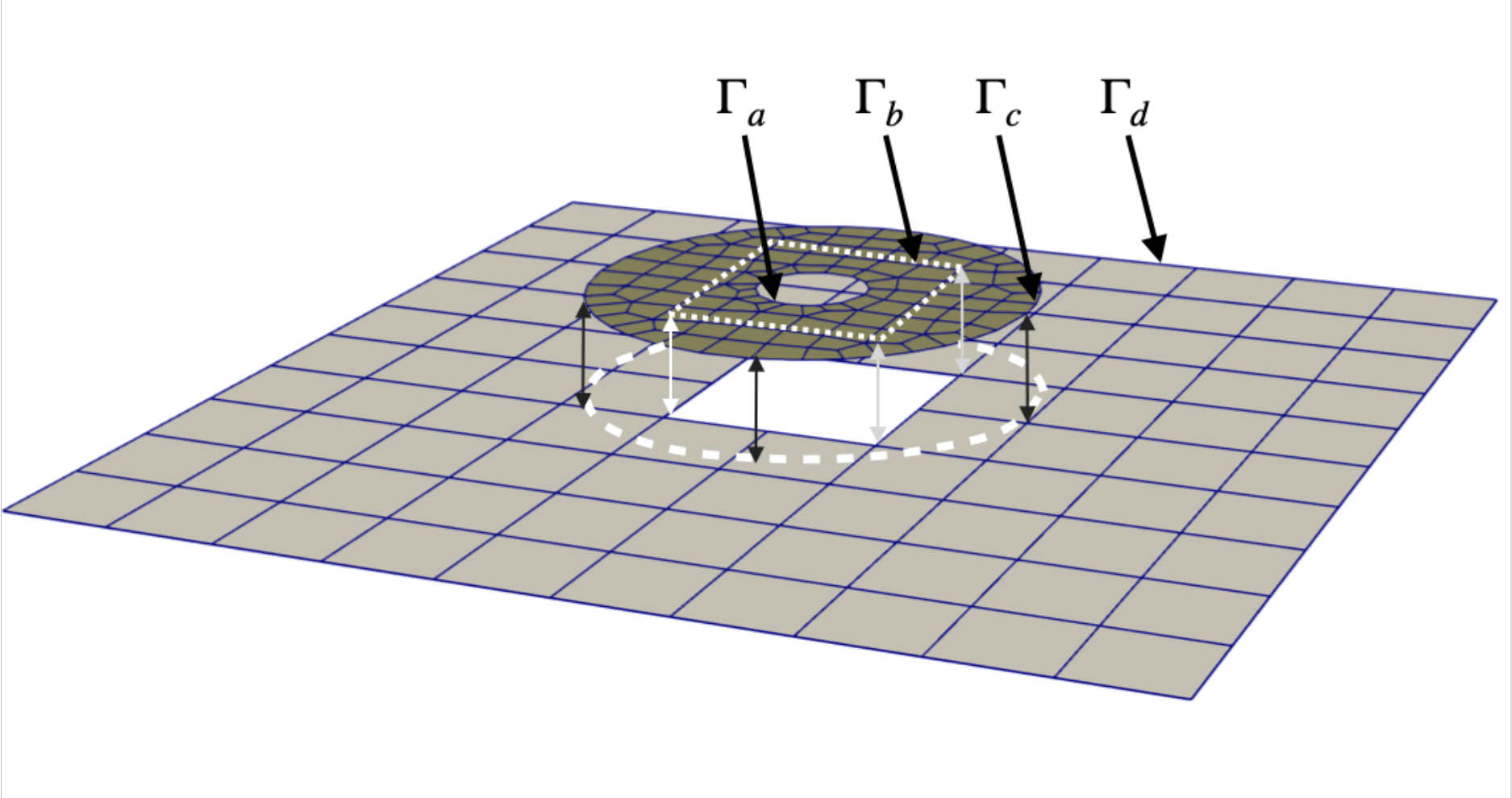} 
   \caption{Exploded view of the overset grid problem showing physical, $\Gamma_a$, $\Gamma_d$, and artificial interior, $\Gamma_b$,$\Gamma_c$, boundaries}
   \label{fig:ChimeraGridExpand}
\end{figure}

In a recent paper, \cite{KOPRIVA2022110732}, hereinafter called Paper I, we studied the overset domain problems for linear hyperbolic systems. We showed that in general, when one cannot characteristically decouple the solutions in the overlap regions, the coupling between the domains must be {\it two-way} for the problem to be well-posed. It is not sufficient to pass only data from the base to the overset domain along $\Gamma_c$ and vice versa along $\Gamma_b$. If limited to one--way coupling, there are terms in the equation for the norm of the solution (energy) for which boundary or initial data is needed, but is not available, and so the problem is not well--posed. 

To impose two--way coupling between the subdomains and to form a well-posed problem, we proposed a novel formulation of the continuous overset domain problem for linear hyperbolic systems. Conditions were applied along $\Gamma_b$ and $\Gamma_c$ on {\it both} domains through penalty terms proportional to the jump in the two solutions. The penalty terms applied to the interior of the subdomain serves to cancel those terms for which no data is available. Under general, but well--defined, conditions on the coupling matrices in the penalty terms, the overset domain problem with interior penalties is energy bounded. Since the problems were linear, energy boundedness could then be used to show that starting with the initial conditions of the single domain problem on each subdomain, the solution of the overset domain problem was identical to the original one, and hence also well-posed. Satisfying another well--defined condition on the penalty matrices ensured conservation. 

We also \textcolor{black}{enabled volume coupling by adding }optional penalty terms to the interior of the overlap region, which could be used to more tightly couple the solutions between the subdomains. \textcolor{black}{The volume coupling was also two-way, and proportional to the difference between the solutions on the two domains, unlike what is traditionally done in numerical implementations of overset grid methods where data is often interpolated from one grid to another. With the penalty formulation, however, the volume coupling was shown to preserve well-posedness and conservation of the overset problem with two-way coupling of the interface penalties, again under well-defined conditions on the coefficient matrices.}

The formulation proposed in Paper I is suitable for linear problems such as Maxwell's equations or linear acoustics, but not for nonlinear problems, such as the Euler gas-dynamics or shallow water equations. In this paper we propose formulations suitable for such nonlinear problems.

For the approximation of nonlinear PDEs it has become popular to substitute $L^2$ stability, like that used in Paper I, with discrete {\it entropy stability}, where a mathematical entropy function can be shown to be bounded by initial and boundary data.  For a review, see \cite{Tadmor:2003fv}. For some systems, like the Burgers equation or shallow water equations, the $L^2$ energy norm is an entropy, so entropy boundedness ensures that the solution components are bounded. In other systems, it is possible to define the entropy so that the solution is bounded. For a discussion on the relation between entropy and $L^2$ stability, see \cite{Dutt:1988}, for example. Entropy preserving and entropy stable schemes have been found to be more robust than their linearly stable counterparts, hence their popularity. To ensure that the approximation is entropy stable and consistent, however, the continuous PDE problem it approximates must be entropy bounded and consistent. 

To the best of our knowledge, an examination of entropy bounded formulations of the PDEs approximated by overset methods has not been previously done.

In this paper we propose entropy bounded formulations of the overset grid problem. Following the recipe of Paper I, we enforce two-way coupling through the addition of penalty functions to the PDEs. Unlike those designed for linear problems, the penalty functions are now nonlinear, and designed to ensure that only physical boundary terms remain in the time derivative of the total entropy. As for the energy in the linear case, we define a global entropy that accounts for the overlap region in a consistent way. We also show that penalty functions that are linear in the entropy variables can be added to the overlap region \textcolor{black}{to add optional volume coupling} to more strongly couple the separate domains, \textcolor{black}{and at the same time preserve entropy boundedness and conservation}.

\section{The Overset Domain Problem}

The goal is to find a solution, $\statevecGreek \omega$, to a nonlinear system of conservation laws,
\begin{equation}
\partial_t\statevecGreek\omega+ \vecNablaX\cdot\bigstatevec{f}(\statevecGreek\omega) = 0, \quad \spacevec x\in\Omega, t>0
\label{eq:ConservationLawOnOmega}
\end{equation}
on a domain $\Omega$, known here as the ``original problem", as sketched in Fig. \ref{fig:2DGemetry}. Here, $\statevecGreek \omega$ is the state vector of $p$ unknowns, and $\bigstatevec f = \sum_{i=1}^d \statevec f_i(\statevecGreek \omega)\hat x_i$ is the nonlinear flux vector, for a $d$ dimensional geometry in $\spacevec x$ with unit direction vectors $\hat x_i,\; i= 1,\ldots,d$. In this paper we use $\partial_\xi$ for $\frac{\partial}{\partial\xi}$ for the derivative with respect to some variable $\xi$ as the shorthand for partial derivatives; subscripts are used as identifiers.
\begin{figure}[tbp] 
   \centering
   \includegraphics[width=2.0in]{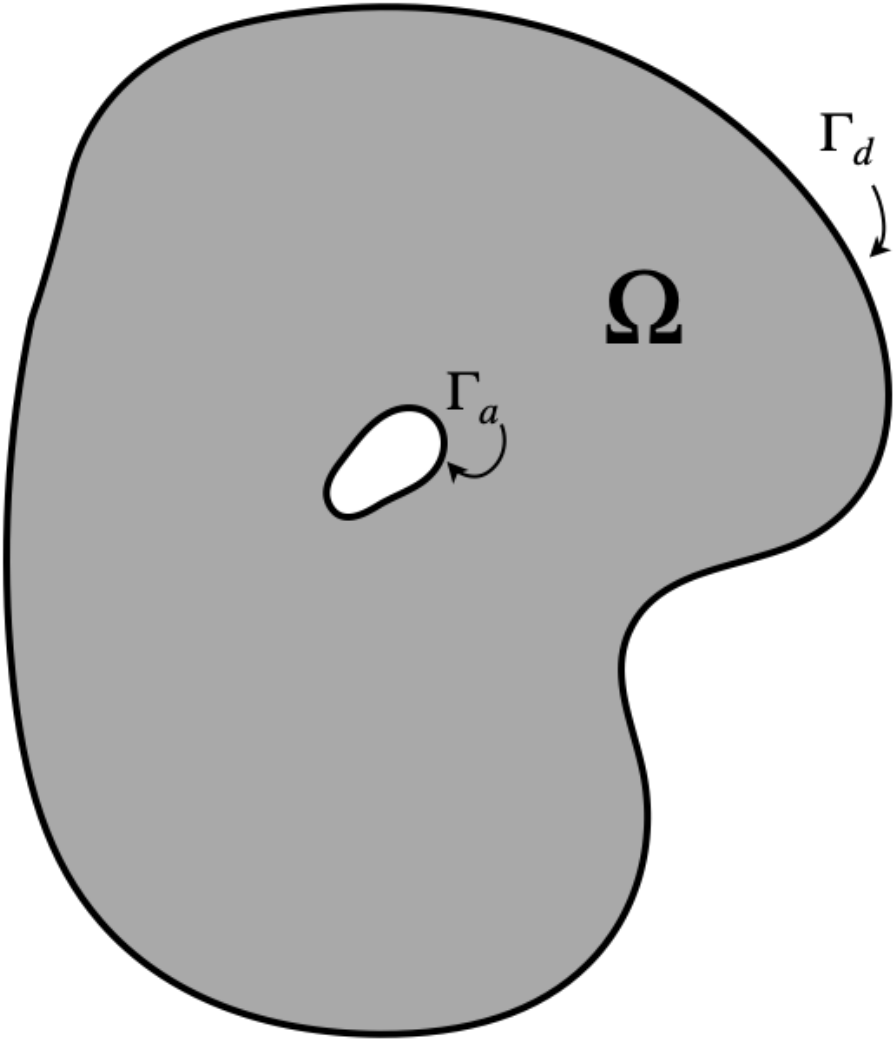} 
   \caption{Diagram of the original problem domain $\Omega$,  sketched in 2D}
   \label{fig:2DGemetry}
\end{figure}

We assume that the nonlinear system \eqref{eq:ConservationLawOnOmega} is entropy bounded. \textcolor{black}{Thus, see e.g. \cite{Tadmor:2003fv}, }we assume that there is an entropy pair $(s,\spacevec f^\epsilon)$, where $s$ is the convex scalar entropy function and $\spacevec f^\epsilon$ is the entropy flux space--vector. 
Associated with the entropy pair are entropy variables, $\statevec w = \partial s/\partial \statevecGreek\omega$, which contract with the state variables as
\begin{equation}
\begin{gathered}
\statevec w^T\partial_t\statevecGreek\omega = \left( \frac{\partial s}{\partial\statevecGreek\omega}\right)^T\partial_t\statevecGreek\omega = \partial_t s(\statevecGreek\omega)\hfill\\
\statevec w^T\vecNablaX \cdot\bigstatevec f = \vecNablaX\cdot\spacevec f^\epsilon,\hfill
\end{gathered}
\label{eq:GeneralContraction}
\end{equation}
so that, for smooth solutions, the entropy satisfies the scalar conservation law
\begin{equation}
\partial_t s + \vecNablaX\cdot\spacevec f^\epsilon = 0, \quad \spacevec x\in\Omega,
\label{eq:GeneralEntropyEquation}
\end{equation}
with equality, $=$, replaced by the inequality, $\le$, for non-smooth solutions. 

One can show that the entropy is bounded by initial and boundary data from \eqref{eq:GeneralEntropyEquation}. Define the inner product
\begin{equation}
\iprod{\statevec u,\statevec v}_\Omega = \int_\Omega \statevec u^T\statevec v d\spacevec x
\end{equation}
for any state vectors $\statevec u,\statevec v$, and similarly for scalars. Then
\begin{equation}
\iprod{\statevec w, \partial_t\statevecGreek\omega}_\Omega + \iprod{\statevec w,  \vecNablaX\cdot\bigstatevec{f}}_\Omega = \iprod{\partial_t s,1}_\Omega + \iprod{\vecNablaX\cdot\spacevec f^\epsilon,1}_\Omega=0.
\label{eq:ContractionOfSystem}
\end{equation}
If we define the total entropy on $\Omega$ as $\bar s_\Omega = \iprod{s,1}_\Omega  = \int_\Omega s d\Omega$, and use Gauss' law on \eqref{eq:ContractionOfSystem}, the total entropy satisfies
\begin{equation}
\frac{d\bar s_\Omega}{dt} + \int_{\partial \Omega} \spacevec f^\epsilon\cdot\hat n \dS = 0.
\label{eqEntropyOverOmega}
\end{equation}
In this way, the total entropy is determined solely by physical boundary and initial values. In this paper we will assume that physical boundary conditions are applied so that
\begin{equation}
 \int_{\partial \Omega} \spacevec f^\epsilon\cdot\hat n \dS  \ge 0,
\end{equation}
and then that initial conditions are specified so that 
\begin{equation}
\bar s_\Omega(t)\le \bar s_\Omega(0)
\end{equation}
for all $t\ge 0$ \cite{Dutt:1988}.

The overset domain problem subdivides the domain $\Omega$ into overlapping subdomains, $\Omega_k$ that are geometrically simpler, and completely cover the domain, $\Omega = \bigcup_k\Omega_k$. In this paper we will limit the problem to two domains,  $\Omega_u$, $\Omega_v$, as sketched in Fig. \ref{fig:2DOversetGemetry} in two space dimensions. The base domain, $\Omega_v$, has an artificial hole bounded by the curve $\Gamma_b$. The overset domain extends beyond that hole to an artificial outer boundary bounded by the curve, $\Gamma_c$. 
At a minimum, boundary conditions are required along the physical boundaries $\Gamma_a$, $\Gamma_d$, but also at the artificial subdomain boundaries, $\Gamma_b$ and $\Gamma_c$.

\begin{figure}[tbp] 
   \centering
   \includegraphics[width=5.5in]{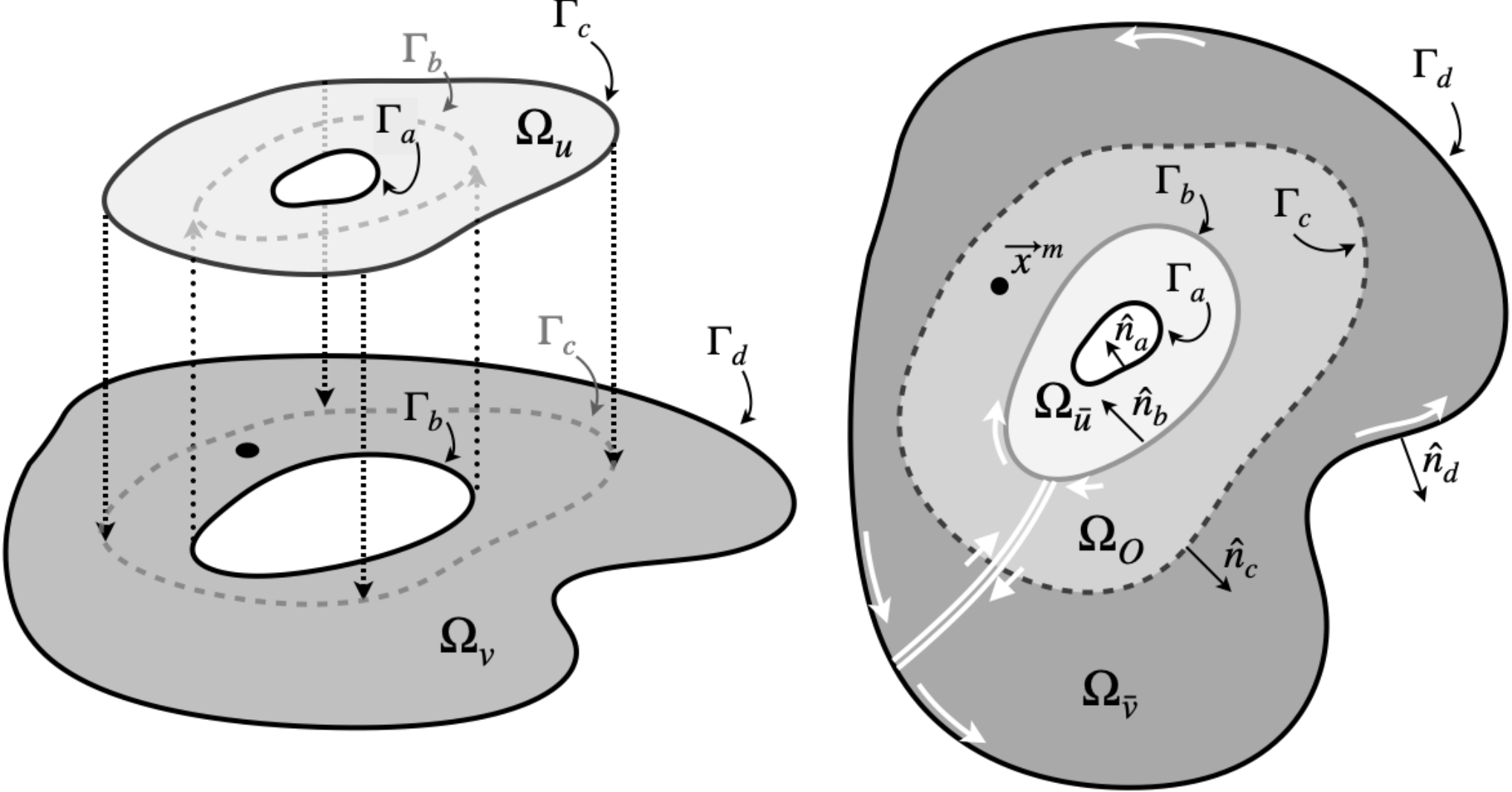} 
   \caption{Diagrams of the overset geometry in 2D \textcolor{black}{with the vertical projection of the domains and curves on the right. The physical boundaries are $\Gamma_a$ and $\Gamma_d$. The interior interfaces are $\Gamma_b$ and $\Gamma_b$. The normals are defined to point to the exterior of the overset domains, $\Omega_u$ and $\Omega_v$. Boundaries are traversed in a counter-clockwise fashion as illustrated by the white arrows and paths on the right. As a result of their definition in terms of the original overset domains, the normals on the complementary domains $\Omega_{\bar u}$ and $\Omega_{\bar v}$ are not in standard outward form. (C.f. $\hat n_c$ pointing into subdomain $\Omega_{\bar v}$.)}}
   \label{fig:2DOversetGemetry}
\end{figure}

In Paper I, we showed that for linear problems it was not sufficient to only apply dissipative boundary conditions along the physical and subdomain boundaries. To guarantee bounded energy for linear problems we introduced linear penalty terms  \cite{Nordstrom:2014qd,NORDSTROM2021109821} to the conservation laws in each domain to enforce two--way coupling. 

Following the approach of Paper I, we now write the overset (O) and base (B) PDEs with nonlinear penalty terms applied along the interface boundaries. To do so, let
the function $\mathcal L^\alpha$  be a lifting operator  \cite{Arnoldetal2002} that returns the values along a curve, $\Gamma_\alpha$. For example, over a two-dimensional domain $V$ containing the curve $\Gamma_\alpha$,
\begin{equation}
\int_V \statevecGreek \psi^T\mathcal L^\alpha\left[\statevec q\right]dV = \int_{\Gamma_\alpha} \statevecGreek \psi^T(\spacevec x(l))\statevec q(\spacevec x(l))dl,
\label{eq:2DLifting}
\end{equation}
where $\statevecGreek \psi$ is some test function. The lifting function applied to some function, $\statevec q$, will just become the function $\statevec q$ evaluated along the curve in the weak form of the equations. 

Let $\mathcal P^\alpha_{u}(\statevec u, \statevec v)$, $\mathcal P^\alpha_{v}(\statevec u, \statevec v)$ be penalty functions defined along a curve $\Gamma^\alpha$ with the property that for any admissible $\statevecGreek \omega$, $\mathcal P_{\bullet}(\statevecGreek \omega, \statevecGreek \omega)=0$, so that the penalty vanishes when there is no difference in the arguments. \textcolor{black}{This condition is required for consistency with the original equations so that the penalty terms vanish when $u=v$.} Later, we will show that, in general, $\mathcal P_{\bullet}^\alpha = \vec{\mathcal P}_{\bullet}\cdot \hat n_\alpha$ for the unit normal $\hat n_\alpha$ along $\Gamma_\alpha$.

In strong form, the overset domain equations with two-way coupling are
\begin{equation}
\begin{gathered}
(O)\quad \partial_t \statevec u + \vecNabla_x \cdot \bigstatevec f(\statevec u) + \mathcal L^b\left[\mathcal P^b_{u}(\statevec u, \statevec v)\right] + \mathcal L^c\left[\mathcal P^c_{u}(\statevec u, \statevec v)\right]= 0\quad \vec x\in \Omega_u\hfill\\
(B)\quad \partial_t \statevec v + \vecNabla_x \cdot \bigstatevec f(\statevec v) + \mathcal L^b\left[\mathcal P^b_{v}(\statevec u, \statevec v)\right] + \mathcal L^c\left[\mathcal P^c_{v}(\statevec u, \statevec v)\right]= 0\quad \vec x\in \Omega_v.\hfill
\end{gathered}
\label{eq:TwoEquationStrongForm}
\end{equation}
To specify the full overset domain problem, we assume that dissipative boundary conditions are applied along $\Gamma_a$ and $\Gamma_d$, and proper initial conditions are set; we restrict ourselves in this paper, however, to the study of the effects of the artificial interface boundaries, $\Gamma_b$ and $\Gamma_c$. 

We get the weak form of the overset domain equations by multiplying \eqref{eq:TwoEquationStrongForm} with test functions and integrating over the domains. In inner product notation, the weak form becomes
\begin{equation}
\begin{gathered}
(O)\quad \iprod{\statevecGreek\phi_u,\partial_t \statevec u} +\iprod{\statevecGreek\phi_u,\vecNabla_x \cdot \bigstatevec f(\statevec u)} 
+ \int_{\Gamma_b}\statevecGreek\phi_u^T\mathcal P^b_{u}(\statevec u, \statevec v)\dS
+ \int_{\Gamma_c}\statevecGreek\phi_u^T\mathcal P^c_{u}(\statevec u, \statevec v)\dS= 0\quad \vec x\in \Omega_u\hfill\\
(B)\quad  \iprod{\statevecGreek\phi_v,\partial_t \statevec v} + \iprod{\statevecGreek\phi_v,\vecNabla_x \cdot \bigstatevec f(\statevec v)} + \int_{\Gamma_b}\statevecGreek\phi_v^T\mathcal P^b_{v}(\statevec u, \statevec v)\dS + \int_{\Gamma_c}\statevecGreek\phi_v^T\mathcal P^c_{v}(\statevec u, \statevec v)\dS= 0\quad \vec x\in \Omega_v.\hfill
\end{gathered}
\label{eq:TwoEquationWeakForm}
\end{equation}

\section{Entropy Boundedness and Conservation for the Overset Domain Problem}

As it was necessary to consistently define the total energy for linear problems in Paper I, it is also necessary to define the total entropy for the overset problem here. The issue is that in the overlap region, $\Omega_O$, the entropy gets counted twice if we simply add the entropies of the two domains.

The problem of enforcing entropy boundedness reduces to finding the penalty functions $\mathcal P$ so that the total entropy is determined solely by (physical) boundary and initial data, which makes it necessary to define the total entropy in the overlap region to be consistent with the original problem.
In the linear analysis of Paper I, we used the fact that (see Fig. \ref{fig:2DOversetGemetry})
\begin{equation}
\begin{gathered}
\inorm{\statevec u}^2_{\Omega_u} = \inorm{\statevec u}^2_{\Omega_{\bar u}} + \inorm{\statevec u}^2_{\Omega_O},\\
\inorm{\statevec v}^2_{\Omega_v} = \inorm{\statevec v}^2_{\Omega_{\bar v}} + \inorm{\statevec v}^2_{\Omega_O}.\\
\end{gathered}
\end{equation}
So it was true that, for $0< \eta< 1$, the combinations
\begin{equation}
\begin{gathered}
\inorm{\statevec u}^2_{\Omega_u} - \eta \ \inorm{\statevec u}^2_{\Omega_O} = \inorm{\statevec u}^2_{\Omega_{\bar u}} + (1-\eta)\inorm{\statevec u}^2_{\Omega_O}\ge 0,\\
\inorm{\statevec v}^2_{\Omega_v} - (1-\eta)  \inorm{\statevec v}^2_{\Omega_O} = \inorm{\statevec v}^2_{\Omega_{\bar v}} + \eta \inorm{\statevec v}^2_{\Omega_O}\ge 0,\\
\end{gathered}
\label{eq:NewNorms1DW}
\end{equation}
and 
\begin{equation}
\inorm{\statevec u}^2_{\Omega_u} + \inorm{\statevec v}^2_{\Omega_v} -\left\{\eta \ \inorm{\statevec u}^2_{\Omega_O} + (1-\eta)  \inorm{\statevec v}^2_{\Omega_O}\right\} \ge 0
\label{eq:OversetNorm}
\end{equation}
define norms, being positive unless the argument is zero. The overset norm \eqref{eq:OversetNorm} is consistent with the norm over $\Omega$, since if the solutions match the solution $\statevecGreek\omega$,
\begin{equation}
\inorm{\statevecGreek\omega}^2_{\Omega_u} + \inorm{\statevecGreek\omega}^2_{\Omega_v} -\left\{\eta \ \inorm{\statevecGreek\omega}^2_{\Omega_O} + (1-\eta)  \inorm{\statevecGreek\omega}^2_{\Omega_O}\right\} = \inorm{\statevecGreek\omega}_\Omega^2.
\end{equation}

\begin{rem}
\textcolor{black}{At the PDE level the choice of $\eta$ is arbitrary, but in a numerical implementation it gives one the flexibility to choose $\eta$ to favor one domain over another, say if one grid is better resolved and presumably more accurate.}
\end{rem}
For the entropy analysis, we must combine the component entropies so that the {\it combination is also an entropy}, and contracts with the time derivative and divergence to give the entropy and entropy flux, while being consistent with total entropy over $\Omega$. To that end, let us define a new state vector, $\statevec U$, and new entropy variables, $\statevec W$, to be
\begin{equation}
\statevec U = \left\{ 
\begin{gathered}
\left[\begin{array}{c}\statevec u \\\statevec 0\end{array}\right]\quad \spacevec x\in\Omega_{\bar u}\\
\left[\begin{array}{c}\statevec u \\\statevec v\end{array}\right]\quad \spacevec x\in\Omega_{O}\\
\left[\begin{array}{c}\statevec 0 \\\statevec v\end{array}\right]\quad \spacevec x\in\Omega_{\bar v}
\end{gathered}
\right.
\quad
\statevec W = \left\{ 
\begin{gathered}
\left[\begin{array}{c}\statevec w_u \\\statevec 0\end{array}\right]\quad \spacevec x\in\Omega_{\bar u}\hfill\\
\left[\begin{array}{c}(1-\eta)\statevec w_u \\\eta\statevec w_v\end{array}\right]\quad \spacevec x\in\Omega_{O}\hfill\\
\left[\begin{array}{c}\statevec 0 \\\statevec w_v\end{array}\right]\quad \spacevec x\in\Omega_{\bar v}\hfill
\end{gathered}
\right.
\label{eq:UandWVectors}
\end{equation}
where, to simplify notation,  $\statevec w_u \equiv \statevec w(\statevec u)$, etc. The constant $0<\eta<1$ is a weighting parameter that convexly weights the contributions in the overlap region.
Then
\begin{equation}
\statevec W^T\partial_t \statevec U =  \left\{ 
\begin{gathered}
\partial_t s(\statevec u)\quad \spacevec x\in\Omega_{\bar u}\hfill\\
(1-\eta)\partial_t s(\statevec u) + \eta \partial_t s(\statevec v)
\quad \spacevec x\in\Omega_{O}\hfill\\
\partial_t s(\statevec v)\quad \spacevec x\in\Omega_{\bar v}\hfill
\end{gathered}
\right.
= \frac{\partial}{\partial t} \left\{ 
\begin{gathered}
s(\statevec u)\quad \spacevec x\in\Omega_{\bar u}\hfill\\
(1-\eta)s(\statevec u) + \eta s(\statevec v)
\quad \spacevec x\in\Omega_{O}\hfill\\
s(\statevec v)\quad \spacevec x\in\Omega_{\bar v}\hfill
\end{gathered}
\right. \quad\equiv \frac{\partial S}{\partial t}.
\end{equation}
Note that the entropy, $S$, defined as
\begin{equation}
S(\statevec u, \statevec v) = \left\{ 
\begin{gathered}
s(\statevec u)\quad \spacevec x\in\Omega_{\bar u}\hfill\\
(1-\eta)s(\statevec u) + \eta s(\statevec v)
\quad \spacevec x\in\Omega_{O}\hfill\\
s(\statevec v)\quad \spacevec x\in\Omega_{\bar v}\hfill
\end{gathered}
\right.
\label{eq:OversetEntropyDef}
\end{equation}
 is equal to the entropy $s(\statevecGreek \omega)$ on $\Omega$ when $\statevec u = \statevecGreek \omega$ and $\statevec v = \statevecGreek\omega$. So it is consistent. Furthermore, it is convex on all subdomains since $s(\statevec u)$ and $s(\statevec v)$ are convex, and the combination on $\Omega_O$ is also convex.

Not only do the entropy variables contract with the time derivative of the state vector to give the time derivative of the entropy, it is also true that $\statevec W = \frac{\partial S}{\partial\statevec U}$, for differentiating with respect to the components of $\statevec U$,
\begin{equation}
 \frac{\partial S}{\partial \statevec U} =  \left\{ 
\begin{gathered}
\left[\begin{array}{c} \frac{\partial s}{\partial \statevec u} \\\statevec 0\end{array}\right]\quad \spacevec x\in\Omega_{\bar u}\hfill\\
\left[\begin{array}{c}(1-\eta)\frac{\partial s}{\partial \statevec u} \\\eta\frac{\partial s}{\partial \statevec v}\end{array}\right]\quad \spacevec x\in\Omega_{O}\hfill\\
\left[\begin{array}{c}\statevec 0 \\\frac{\partial s}{\partial \statevec v}\end{array}\right]\quad \spacevec x\in\Omega_{\bar v}\hfill
\end{gathered}
\right.
\quad = \statevec W,
\end{equation}
when we define the derivative of zero with respect to zero to be zero.

 For the flux, let the $i^{th}$ flux component of $\bigstatevec F = \sum_i \statevec F_i\hat x_i$ be organized similarly as
 \begin{equation}
 \statevec F_i = \left\{ 
\begin{gathered}
\left[\begin{array}{c}\statevec f_i(\statevec u) \\\statevec 0\end{array}\right]\quad \spacevec x\in\Omega_{\bar u}\\
\left[\begin{array}{c}\statevec f_i(\statevec u) \\\statevec f_i(\statevec v)\end{array}\right]\quad \spacevec x\in\Omega_{O}\\
\left[\begin{array}{c}\statevec 0 \\\statevec f_i(\statevec v)\end{array}\right]\quad \spacevec x\in\Omega_{\bar v}
\end{gathered}
\right. .
\label{eq:BigFVector}
 \end{equation}
 Then
 \small
 \begin{equation}
\statevec W^T\vecNablaX\cdot\bigstatevec F =  \left\{ 
\begin{gathered}
\vecNabla_x \cdot\spacevec f^\epsilon(\statevec u)\quad \spacevec x\in\Omega_{\bar u}\hfill\\
(1-\eta)\vecNabla_x \cdot\spacevec f^\epsilon(\statevec u) + \eta\vecNabla_x \cdot\spacevec f^\epsilon(\statevec v)
\quad \spacevec x\in\Omega_{O}\hfill\\
\vecNabla_x \cdot\spacevec f^\epsilon(\statevec v)\quad \spacevec x\in\Omega_{\bar v}\hfill
\end{gathered}
\right.
= \vecNabla_x\cdot \left\{ 
\begin{gathered}
\spacevec f^\epsilon(\statevec u)\quad \spacevec x\in\Omega_{\bar u}\hfill\\
(1-\eta)\spacevec f^\epsilon(\statevec u) + \eta \spacevec f^\epsilon(\statevec v)
\quad \spacevec x\in\Omega_{O}\hfill\\
\spacevec f^\epsilon(\statevec v)\quad \spacevec x\in\Omega_{\bar v}\hfill
\end{gathered}
\right. \quad\equiv \vecNabla_x\cdot \spacevec F^\epsilon.
\end{equation}
\normalsize
The derivatives must be interpreted in a weak sense.

Finally, let us write the overset penalty terms as
\begin{equation}
\statevec P^\alpha\left(\statevec u,\statevec v\right) = 
 \left\{ 
\begin{gathered}
\left[\begin{array}{c}\ \mathcal{P}^\alpha_u(\statevec u,\statevec v)  \\\statevec 0\end{array}\right]\quad \spacevec x\in\Omega_{\bar u}\\
\left[\begin{array}{c}\statevec 0 \\\statevec 0\end{array}\right]\quad \spacevec x\in\Omega_{O}\\
\left[\begin{array}{c}\statevec 0 \\  \mathcal{P}^\alpha_v(\statevec u,\statevec v) \end{array}\right]\quad \spacevec x\in\Omega_{\bar v},
\end{gathered}
\right.\
\label{eq:bigPVector}
\end{equation}
where $\alpha$ is either $b$ or $c$. Note that the penalty is not applied to the overlap region, see Paper I \cite{KOPRIVA2022110732} for a discussion.

With the definitions \eqref{eq:UandWVectors},\eqref{eq:BigFVector}, and \eqref{eq:bigPVector}, the overset problem can be written in strong form as
\begin{equation}
\partial_t \statevec U + \vecNabla_x\cdot\bigstatevec F  + \mathcal L^b\left[\statevec P^b\right]+ \mathcal L^c\left[\statevec P^c\right]= 0.
\label{eq:BigOversetEquation}
\end{equation}
Contracting with the entropy variable vector leads to a scalar equation for the entropy of the overset domain problem,
\begin{equation}
\textcolor{black}{
\statevec W^T\partial_t \statevec U + \statevec W^T\vecNabla_x\cdot \bigstatevec F +  \statevec W^T\statevec P= \partial_t S + \vecNabla_x\cdot \spacevec F^\epsilon + P^b + P^c= 0,}
\label{eq:BigEntropyEqn}
\end{equation}
where $P^\alpha=\statevec W^T \mathcal L^\alpha\left[\statevec P^\alpha\right] $, and $S$ is defined in \eqref{eq:OversetEntropyDef}. Thus, the overset domain entropy satisfies
\begin{equation}
\textcolor{black}{
\partial_t S + \vecNabla_x\cdot \spacevec F^\epsilon\le 0}
\label{eq:FullEntropyEquation}
\end{equation}
 as long as the penalty terms are non-negative, with equality when $\statevec u = \statevec v$, as in the original problem, \eqref{eq:GeneralEntropyEquation}.

\begin{rem}
\textcolor{black}{We see, then, that the overset domain entropy variables, $\statevec W$, contract with the overset domain state, $\statevec U$, and the overset domain flux, $\bigstatevec F$, in the same way as in the original single domain problem, ensuring that $S$ is an entropy with an entropy flux $\spacevec F^\epsilon$. 
Thus, the overset domain problem has an entropy pair $(S, \spacevec F^\epsilon)$, just as the original problem has the pair $(s, \spacevec f^\epsilon)$ over the same domain. Furthermore, $S$, defined in \eqref{eq:OversetEntropyDef}, satisfies the same equation as the entropy, \eqref{eq:GeneralEntropyEquation}, with equality when $\statevec u=\statevec v$. 
The overset domain is therefore consistent with the original problem, since when $\statevecGreek\omega = \statevec u = \statevec v$, then $S=s$, on all domains.
This result is of fundamental importance because it shows that the overset domain problem has the same properties as the original one, and when the initial and boundary data match, the overset domain problem has the same solution as the original one. }
\end{rem}

We can re-write the weak form, \eqref{eq:TwoEquationWeakForm}, as a single overset domain problem if we define a test function
\begin{equation}
\statevecGreek\Phi = \left\{ 
\begin{gathered}
\left[\begin{array}{c}\statevecGreek\phi_u \\\statevec 0\end{array}\right]\quad \spacevec x\in\Omega_{\bar u}\hfill\\
\left[\begin{array}{c}(1-\eta)\statevecGreek\phi_u \\\eta\statevecGreek\phi_v \end{array}\right]\quad \spacevec x\in\Omega_{O}\hfill\\
\left[\begin{array}{c}\statevec 0 \\\statevecGreek\phi_v\end{array}\right]\quad \spacevec x\in\Omega_{\bar v}\hfill
\end{gathered}
\right..
\end{equation}
Multiplying \eqref{eq:BigOversetEquation} by $\statevecGreek\Phi$ and integrating over the domain gives the first weak form,
\begin{equation}
\iprod{\statevecGreek\Phi,\partial_t \statevec U}_\Omega + \iprod{\statevecGreek\Phi,\vecNabla_x\cdot\bigstatevec F}_\Omega + \int_{\Gamma_b}\statevecGreek\Phi^T\statevec P^b\dS +  \int_{\Gamma_c}\statevecGreek\Phi^T\statevec P^c \dS = 0.
\label{eq:BigMultiDWeakForm1}
\end{equation}

We get an alternate weak form by applying Gauss' law (multidimensional integration by parts) to the flux integral in \eqref{eq:BigMultiDWeakForm1}. Separating the integrals  \textcolor{black}{and applying the Gauss Law (integration by parts) to each of the volume integrals} (see Fig. \ref{fig:2DOversetGemetry} \textcolor{black}{, noting how the normals flip direction depending on the subdomain)}, and using the usual notation that $\Gamma^\pm$ represents the limit from each side of the curve with respect to the normal \textcolor{black}{to select the correct subdomain value of the global fluxes in \eqref{eq:BigFVector}},
\begin{equation}
\begin{split}
\iprod{\statevecGreek\Phi,\vecNabla_x\cdot\bigstatevec F}_\Omega &= \iprod{\statevecGreek\Phi,\vecNabla_x\cdot\bigstatevec F}_{\Omega_{\bar u}} + \iprod{\statevecGreek\Phi,\vecNabla_x\cdot\bigstatevec F}_{\Omega_{O}} + \iprod{\statevecGreek\Phi,\vecNabla_x\cdot\bigstatevec F}_{\Omega_{\bar v}}
\\&= 
\int_{\Gamma_a}\statevecGreek\Phi^T\bigstatevec F\cdot\hat n_a - \int_{\Gamma_b^+}\statevecGreek\Phi^T\bigstatevec F\cdot\hat n_b -\iprod{\vecNabla_x\statevecGreek\Phi,\bigstatevec F}_{\Omega_{\bar u}}
\\& + \int_{\Gamma_b^-}\statevecGreek\Phi^T\bigstatevec F\cdot\hat n_b
+ \int_{\Gamma_c^-}\statevecGreek\Phi^T\bigstatevec F\cdot\hat n_c - \iprod{\vecNabla_x\statevecGreek\Phi,\bigstatevec F}_{\Omega_{O}}
\\&+ \int_{\Gamma_d}\statevecGreek\Phi^T\bigstatevec F\cdot\hat n_d - \int_{\Gamma_c^+}\statevecGreek\Phi^T\bigstatevec F\cdot\hat n_c
 - \iprod{\vecNabla_x\statevecGreek\Phi,\bigstatevec F}_{\Omega_{\bar v}}.
\end{split}
\label{eq:FluxIntegralExpansion}
\end{equation}

\subsection{Conditions for Entropy Boundedness}\label{EntropyBoundednessSection}

To get the entropy bound, we use \eqref{eq:BigMultiDWeakForm1} with $\statevecGreek\Phi$ replaced with $\statevec W$,
\begin{equation}
\iprod{\statevec W,\partial_t \statevec U }_\Omega + \iprod{\statevec W,\vecNabla_x\cdot\bigstatevec F }_\Omega 
+ \int_{\Gamma_b}\statevec W^T\statevec P^b\dS +  \int_{\Gamma_c}\statevec W^T\statevec P^c \dS= 0.
\end{equation}
and contract the state and flux terms. Then with
$
\textcolor{black}{\frac{d}{dt}\bar S \equiv \iprod{\partial_t S,1}_\Omega,}
$
\begin{equation}
\textcolor{black}{\frac{d}{dt}\bar S + \iprod{\vecNabla_x\cdot \spacevec F^\epsilon,1}_\Omega +    \int_{\Gamma_b}\statevec W^T\statevec P^b\dS +  \int_{\Gamma_c}\statevec W^T\statevec P^c \dS= 0.}
\label{eq:totalEntropy1}
\end{equation}

We separate boundary and interior contributions when we apply Gauss' law to the divergence of the flux integral.  Note that (see Fig. \ref{fig:2DOversetGemetry} and \eqref{eq:FluxIntegralExpansion})
\begin{equation}
\begin{split}
\textcolor{black}{\iprod{\vecNabla_x\cdot \spacevec F^\epsilon,1}_\Omega} &= \int_{\Omega_{\bar u}}\vecNabla_x\cdot \spacevec f^\epsilon d\Omega_{\bar u}+  \int_O\vecNabla_x\cdot \spacevec f^\epsilon d\Omega_O +  \int_{\Omega_{\bar v}}\vecNabla_x\cdot \spacevec f^\epsilon d\Omega_{\bar v}.
\\& = \left\{\int_{\Gamma_a}  \spacevec f^\epsilon(\statevec u) \cdot\hat n_a\dS - \int_{\Gamma_b}  \spacevec f^\epsilon(\statevec u) \cdot\hat n_b\dS \right\} 
\\&  +\left\{\int_{\Gamma_b} \left((1-\eta)\spacevec f^\epsilon(\statevec u) + \eta \spacevec f^\epsilon(\statevec v)\right) \cdot\hat n_b\dS + \int_{\Gamma_c} \left((1-\eta)\spacevec f^\epsilon(\statevec u) + \eta \spacevec f^\epsilon(\statevec v)\right) \cdot\hat n_c\dS \right\} 
 \\& +\left\{\int_{\Gamma_d} \spacevec f^\epsilon(\statevec v) \cdot\hat n_d\dS - \int_{\Gamma_c} \spacevec f^\epsilon(\statevec v) \cdot\hat n_c\dS \right\} 
 \\& = \left\{\int_{\Gamma_a}  \spacevec f^\epsilon(\statevec u) \cdot\hat n_a\dS  + \int_{\Gamma_d} \spacevec f^\epsilon(\statevec v) \cdot\hat n_d\dS \right\}
 \\&+  \int_{\Gamma_b} \eta\left( \spacevec f^\epsilon(\statevec v)  -  \spacevec f^\epsilon(\statevec u)\right)\cdot\hat n_b\dS
 + \int_{\Gamma_c} (1-\eta)\left( \spacevec f^\epsilon(\statevec u)  -  \spacevec f^\epsilon(\statevec v)\right)\cdot\hat n_c\dS.
\end{split}
\label{eq:globalentropyfluxintegral}
\end{equation} 

Substituting \eqref{eq:globalentropyfluxintegral} into \eqref{eq:totalEntropy1} and gathering terms,
\begin{equation}
\begin{split}
\frac{d}{dt}\bar S &+ \left\{\int_{\Gamma_a}  \spacevec f^\epsilon(\statevec u) \cdot\hat n_a\dS  + \int_{\Gamma_d} \spacevec f^\epsilon(\statevec v) \cdot\hat n_d\dS \right\} 
\\&+ \int_{\Gamma_b} \left[ -\eta\left( \spacevec f^\epsilon(\statevec u)  -  \spacevec f^\epsilon(\statevec v)\right)\cdot\hat n_b + \statevec w_u^T\mathcal P_u^b+ \statevec w_v^T\mathcal P_v^b\right]\dS 
\\&+ \int_{\Gamma_c}\left[(1-\eta)\left( \spacevec f^\epsilon(\statevec u)  -  \spacevec f^\epsilon(\statevec v)\right)\cdot\hat n_c+\statevec w_u^T\mathcal P_u^c + \statevec w_v^T\mathcal P_v^c\right]\dS
\\&= 0.
\end{split}
\label{eq:totalEntropy2}
\end{equation}
The integrands along the interior interfaces $\Gamma_b$ and $\Gamma_c$ are of the same form and can be written generically as
\begin{equation}
\mathcal B \equiv\left( \spacevec f^\epsilon(\statevec u)  -  \spacevec f^\epsilon(\statevec v)\right)\cdot\spacevec \beta+\statevec w_u^T\mathcal P_u + \statevec w_v^T\mathcal P_v,
\label{eq:MultiDEntropyCondition}
\end{equation}
where
\begin{equation}
\spacevec \beta = \left\{
\begin{gathered}
-\eta\hat n_b, \quad \spacevec x\in\Gamma_b \hfill\\ 
(1-\eta)\hat n_c,\quad \spacevec x \in \Gamma_c.
\end{gathered}
   \right.
\label{eq:BetaCondition2D}
\end{equation}

 We then have the entropy boundedness theorem
\begin{thm}
The total entropy, $\bar S$, is bounded by initial and physical boundary data if $\mathcal B\ge 0$ for each boundary curve, $\Gamma_b$, $\Gamma_c$, where $\mathcal B$ is defined in \eqref{eq:MultiDEntropyCondition} and $\spacevec \beta $ is given by \eqref{eq:BetaCondition2D}.
We define the overset domain problem as entropy preserving if $\mathcal B = 0$ for each interface boundary.
\label{thm:BoundedEntropyThm}
\end{thm}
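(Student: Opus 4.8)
The plan is to read the result off essentially by inspection of the entropy balance \eqref{eq:totalEntropy2}, which has already been assembled from the weak form \eqref{eq:BigMultiDWeakForm1} with test function $\statevecGreek\Phi=\statevec W$, the contraction identities for the entropy pair, and the flux decomposition \eqref{eq:globalentropyfluxintegral}. Equation \eqref{eq:totalEntropy2} already splits $d\bar S/dt$ into (i) the physical boundary fluxes on $\Gamma_a$ and $\Gamma_d$ and (ii) two interface integrals over $\Gamma_b$ and $\Gamma_c$ whose integrands are, by the definitions \eqref{eq:MultiDEntropyCondition}--\eqref{eq:BetaCondition2D}, exactly $\mathcal B$ with $\spacevec\beta=-\eta\hat n_b$ on $\Gamma_b$ and $\spacevec\beta=(1-\eta)\hat n_c$ on $\Gamma_c$. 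So the first step is merely to rewrite \eqref{eq:totalEntropy2} in the compact form $d\bar S/dt + (\text{physical boundary fluxes on }\Gamma_a,\Gamma_d) + \int_{\Gamma_b}\mathcal B\dS + \int_{\Gamma_c}\mathcal B\dS = 0$.

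The second step uses the hypothesis: if $\mathcal B\ge 0$ pointwise along each interface curve, the two interface integrals are non-negative, so $d\bar S/dt$ is bounded above by minus the physical boundary fluxes on $\Gamma_a\cup\Gamma_d$. Integrating in time over $[0,t]$ then expresses $\bar S(t)$ in terms of $\bar S(0)$ and the time history of those physical fluxes alone, with no appeal to data that is unavailable at the artificial interfaces — this is the claimed boundedness by initial and physical boundary data. Invoking the standing assumption of the paper that the physical boundary conditions on $\Gamma_a$ and $\Gamma_d$ are dissipative, i.e. the overset analogue of $\int_{\partial\Omega}\spacevec f^\epsilon\cdot\hat n\dS\ge 0$ used in \eqref{eqEntropyOverOmega}, upgrades this to $\bar S(t)\le\bar S(0)$ for all $t\ge 0$. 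For the entropy-preserving claim, setting $\mathcal B=0$ on both interfaces makes the interface integrals vanish identically and the balance collapses to precisely the single-domain entropy law \eqref{eqEntropyOverOmega} over $\Omega=\Omega_u\cup\Omega_v$, so $\bar S$ is governed with equality by the physical boundary fluxes only, hence conserved when those vanish.

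Since the heavy lifting — the domain-by-domain integration by parts on $\Omega_{\bar u}$, $\Omega_O$, $\Omega_{\bar v}$, the cancellation of the one-sided limits $\Gamma_b^{\pm}$, $\Gamma_c^{\pm}$, and the emergence of the weighted normals $\spacevec\beta$ — has already been carried out in \eqref{eq:FluxIntegralExpansion}--\eqref{eq:totalEntropy2}, there is no genuine analytic obstacle: the theorem is a one-line consequence of \eqref{eq:totalEntropy2} together with $\mathcal B\ge 0$. The only points that require care are bookkeeping ones: confirming that the signs attached to the $\Gamma_b^{\pm}$ and $\Gamma_c^{\pm}$ surface terms — which is where the convex weight $\eta$ enters and produces \eqref{eq:BetaCondition2D} — are consistent with the outward-normal conventions of Fig. \ref{fig:2DOversetGemetry}, and that the divergence of $\spacevec F^\epsilon$ across the interfaces is read in the weak sense used throughout the derivation. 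The substantive work left open by the statement — exhibiting explicit penalty functions $\mathcal P_u$, $\mathcal P_v$ for which $\mathcal B\ge 0$ (or $\mathcal B=0$) actually holds — lies outside this theorem and is presumably taken up next.
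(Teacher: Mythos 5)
Your proof is correct and follows essentially the same route as the paper's: identify the interface integrands in \eqref{eq:totalEntropy2} with $\mathcal B$ under the sign conventions \eqref{eq:BetaCondition2D}, use $\mathcal B\ge 0$ to drop those integrals and obtain $d\bar S/dt \le -\bigl(\int_{\Gamma_a}\spacevec f^\epsilon(\statevec u)\cdot\hat n_a\dS + \int_{\Gamma_d}\spacevec f^\epsilon(\statevec v)\cdot\hat n_d\dS\bigr)$, then integrate in time, with equality in the case $\mathcal B=0$. The only difference is that you spell out the time integration and the appeal to the dissipative physical boundary assumption slightly more explicitly than the paper does.
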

\begin{proof}
When $\mathcal B\ge 0$, the integrals along $\Gamma_b$ and $\Gamma_c$ in \eqref{eq:totalEntropy2} are non-negative so
\begin{equation}
\frac{d \bar S}{dt} \le -\left\{\int_{\Gamma_a}  \spacevec f^\epsilon(\statevec u) \cdot\hat n_a\dS  + \int_{\Gamma_d} \spacevec f^\epsilon(\statevec v) \cdot\hat n_d\dS \right\}.
\label{eq:MultiDEntropyBound1}
\end{equation}
The time derivative of the total entropy \eqref{eq:totalEntropy2} can therefore be bounded solely in terms of the physical boundary values and the initial condition, from which the result follows. When  $\mathcal B= 0$, the inequality in \eqref{eq:MultiDEntropyBound1} becomes an equality, and any entropy gained or lost is done through the physical boundaries.
\end{proof}

\subsection{Conservation Conditions}

We get the conservation conditions on the penalty functions when we integrate \eqref{eq:TwoEquationStrongForm} over the domains, apply  Gauss' law to each flux divergence integral, and combine the results as in \eqref{eq:OversetNorm} (so as to not double count the overlap region), to get
\begin{equation}
\begin{split}
\int_{\Omega_u} {\partial_t \statevec u}d\Omega_u &+ \int_{\Omega_v} {\partial_t \statevec v}d\Omega_v -  \left\{\eta\int_{\Omega_O} {\partial_t \statevec u}d\Omega_O +(1-\eta)\int_{\Omega_O} {\partial_t \statevec v}d\Omega_O\right\}
\\& + \int_{\Gamma_b}\left(\mathcal P_u^b + \mathcal P^b_v\right)\dS +  \int_{\Gamma_c}\left(\mathcal P_u^c + \mathcal P^c_v\right)\ \dS 
\\&+ 
\int_{\Gamma_a}\bigstatevec f(\statevec u)\cdot\hat n_a \dS            + \int_{\Gamma_c}\bigstatevec f(\statevec u)\cdot\hat n_c  \dS
\\&+ \int_{\Gamma_d}\bigstatevec f(\statevec v)\cdot\hat n_d \dS   + \int_{\Gamma_b}\bigstatevec f(\statevec v)\cdot\hat n_b \dS
\\&- \int_{\Gamma_b}\left[\eta\bigstatevec f(\statevec u) + (1-\eta)\bigstatevec f(\statevec v) \right]\cdot\hat n_b \dS  
\\&- \int_{\Gamma_c}\left[\eta\bigstatevec f(\statevec u) + (1-\eta)\bigstatevec f(\statevec v) \right]\cdot\hat n_c  \dS
\\&= 0.
\end{split}
\label{eq:BigMultiDWeakForm3}
\end{equation}
Gathering terms,
\begin{equation}
\begin{split}
\frac{d}{dt}\left\{\int_{\Omega_u} { \statevec u}d\Omega_u\right. &+ \left.\int_{\Omega_v} { \statevec v}d\Omega_v - \left\{ \eta\int_{\Omega_O} { \statevec u}d\Omega_O + (1-\eta)\int_{\Omega_O} { \statevec v}d\Omega_O\right\}\right\}\\& + \int_{\Gamma_b}\left[\mathcal P_u^b + \mathcal P^b_v - \eta\left( \bigstatevec f(\statevec u) -\bigstatevec f(\statevec v)\right)\cdot\hat n_b \right]\dS 
\\& +  \int_{\Gamma_c}\left[\mathcal P_u^b + \mathcal P^b_v + (1-\eta)\left( \bigstatevec f(\statevec u) -\bigstatevec f(\statevec v)\right)\cdot\hat n_c \right]\ \dS 
\\&+ 
\int_{\Gamma_a}\bigstatevec f(\statevec u)\cdot\hat n_a \dS   
+ \int_{\Gamma_d}\bigstatevec f(\statevec v)\cdot\hat n_d \dS 
\\&= 0.
\end{split}
\label{eq:BigMultiDConservation}
\end{equation}

The formulation is conservative when the terms along the artificial interface boundaries vanish, leaving only the physical boundary fluxes, i.e., when
\begin{equation}
\left( \bigstatevec f(\statevec u) -\bigstatevec f(\statevec v)\right)\cdot\spacevec\beta + \mathcal P_u + \mathcal P_v  = 0,
\label{eq:MultiDConservationCondition}
\end{equation}
where $\spacevec \beta$ is defined as before in \eqref{eq:BetaCondition2D}, holds along each of $\Gamma_b$ and $\Gamma_c$.

When we define an integral quantity over a domain $V$, $\bar{\statevec q}_{V} = \int_{V} \statevec q dV$, and assume that \eqref{eq:MultiDConservationCondition} holds on each artificial interface boundary,  the statement of conservation becomes
\begin{equation}
\frac{d}{dt}\left\{ \bar {\statevec u}_{\Omega_u} + \bar {\statevec v}_{\Omega_v}- \left\{\eta \bar {\statevec u}_{\Omega_O}+ (1-\eta) \bar {\statevec v}_{\Omega_O}\right\}\right\} = -\left\{\int_{\Gamma_a}\bigstatevec f(\statevec u)\cdot\hat n_a \dS   
+ \int_{\Gamma_d}\bigstatevec f(\statevec v)\cdot\hat n_d \dS \right\}.
\label{eq:Conservation1}
\end{equation}
When $ {\statevec u} = {\statevec v} =  \statevecGreek\omega$, \eqref{eq:Conservation1} becomes the usual conservation statement over $\Omega$,
\begin{equation}
\frac{d}{dt}\bar{\statevecGreek\omega}_\Omega = -\left\{\int_{\Gamma_a}\bigstatevec f(\statevecGreek\omega)\cdot\hat n_a \dS   
+ \int_{\Gamma_d}\bigstatevec f(\statevecGreek\omega)\cdot\hat n_d \dS \right\}.
\end{equation}
\begin{rem}
\textcolor{black}{Since $\mathcal P_u = 0$ and  $\mathcal P_v=0$ when $ {\statevec u} = {\statevec v}$, } the conservation condition \eqref{eq:MultiDConservationCondition} is trivially satisfied when $ {\statevec u} = {\statevec v}$.
Conversely, the overset problem is not conservative without a penalty unless $\statevec f(\statevec u)=\statevec f(\statevec v)$.
\end{rem}

\subsection{Finding the Entropy Bounding and Conserving Penalty Functions}
We find the penalty functions from the two equations \eqref{eq:MultiDEntropyCondition} and \eqref{eq:MultiDConservationCondition}, copied here,
\begin{equation}
\left( \spacevec f^\epsilon(\statevec u)  -  \spacevec f^\epsilon(\statevec v)\right)\cdot\spacevec \beta+\statevec w_u^T\mathcal P_u + \statevec w_v^T\mathcal P_v\ge 0\quad \text{(Entropy Boundedness)},
\label{eq:MultiDEntropyConditionCopy}
\end{equation}
\begin{equation}
\left( \bigstatevec f(\statevec u) -\bigstatevec f(\statevec v)\right)\cdot\spacevec\beta + \mathcal P_u + \mathcal P_v  = 0. \quad \text{(Conservation)}
\label{eq:MultiDConservationConditionCopy}
\end{equation}
The equations are linear in $\mathcal P_u $ and $\mathcal P_v$.

From the conservation condition,
\begin{equation}
\mathcal P_v = -\left(\spacevec\beta\cdot\left(\bigstatevec f(\statevec u)-\bigstatevec f(\statevec v) \right)   +  \mathcal P_u\right).
\label{eq:consConditionForSigmav}
\end{equation}
Substituting \eqref{eq:consConditionForSigmav} into \eqref{eq:MultiDEntropyConditionCopy},
\begin{equation}
\spacevec\beta\cdot\left(\spacevec f^\epsilon(\statevec u) - \spacevec f^\epsilon(\statevec v)\right) + \statevec w^T_u \mathcal P_u -\statevec w^T_v\left(\spacevec\beta\cdot\left(\bigstatevec f(\statevec u)-\bigstatevec f(\statevec v) \right)   +  \mathcal P_u\right)\ge 0.
\end{equation}
Re-arranging gives an equation for $\mathcal P_u$,
\begin{equation}
\spacevec\beta\cdot\left( \spacevec f^\epsilon(\statevec u) - \spacevec f^\epsilon(\statevec v)- \statevec w_v^T\left(\bigstatevec f(\statevec u)-\bigstatevec f(\statevec v) \right) \right)  +\left(\statevec w_u -\statevec w_v\right)^T\mathcal P_u\ge 0.
\end{equation}
If, instead, we write
\begin{equation}
\mathcal P_u = -\left(\spacevec\beta\cdot\left(\bigstatevec f(\statevec u)-\bigstatevec f(\statevec v) \right)   +  \mathcal P_v\right),
\label{eq:consConditionForSigmau}
\end{equation}
then we get an equation for $\mathcal P_v$,
\begin{equation}
\spacevec\beta\cdot\left(\spacevec f^\epsilon(\statevec u) - \spacevec f^\epsilon(\statevec v)- \statevec w_u^T\left(\bigstatevec f(\statevec u)-\bigstatevec f(\statevec v) \right) \right)
-\left(\statevec w_u -\statevec w_v\right)^T\mathcal P_v\ge 0.
\end{equation}
Gathering the two, we have equations for the penalty functions, 
\begin{equation}
\begin{gathered}
\spacevec\beta\cdot\left(\spacevec f^\epsilon(\statevec u) - \spacevec f^\epsilon(\statevec v)- \statevec w_v^T\left(\bigstatevec f(\statevec u)-\bigstatevec f(\statevec v) \right) \right)  +\left(\statevec w_u -\statevec w_v\right)^T\mathcal P_u\ge 0\hfill\\
\spacevec\beta\cdot\left(\spacevec f^\epsilon(\statevec u) - \spacevec f^\epsilon(\statevec v)- \statevec w_u^T\left(\bigstatevec f(\statevec u)-\bigstatevec f(\statevec v) \right) \right)
 -\left(\statevec w_u -\statevec w_v\right)^T\mathcal P_v\ge 0,\hfill
\end{gathered}
\label{eq:sigmaMatricesConditions}
\end{equation}
which must hold for any $\spacevec \beta = \beta\hat n$. Taking $\mathcal P = \spacevec {\mathcal P}\cdot\hat n$, it is sufficient for each component to satisfy
\begin{equation}
\begin{gathered}
\beta\left( f_i^\epsilon(\statevec u) -  f_i^\epsilon(\statevec v)- \statevec w_v^T\left(\statevec f_i(\statevec u)-\statevec f_i(\statevec v) \right) \right)  +\left(\statevec w_u -\statevec w_v\right)^T\mathcal P_{i,u}\ge 0\hfill\\
\beta\left( f_i^\epsilon(\statevec u) -  f_i^\epsilon(\statevec v)- \statevec w_u^T\left(\statevec f_i(\statevec u)-\statevec f_i(\statevec v) \right) \right)
 -\left(\statevec w_u -\statevec w_v\right)^T\mathcal P_{i,v}\ge 0.\hfill
\end{gathered}
\label{eq:ComponentConditions}
\end{equation}

It is convenient to re-write \eqref{eq:ComponentConditions} in terms of the standard jump operator, $\jump{\cdot}$. Let us drop the subscript $i$ in the following and define $\jump{f^\epsilon} \equiv f^\epsilon(\statevec u) - f^\epsilon(\statevec v)$, etc. Then, compactly,
\begin{equation}
\begin{gathered}
\beta\left( \jump{f^\epsilon} - \statevec w^T_v\jump{\statevec f}\right)+ \jump{\statevec w}^T\mathcal P_u\ge 0\hfill\\
\beta\left( \jump{f^\epsilon} - \statevec w^T_u\jump{\statevec f}\right)- \jump{\statevec w}^T\mathcal P_v\ge 0\hfill
\end{gathered}
\label{eq:jumpVersionsOfPenaltyInequality}
\end{equation}
for each component in each coordinate direction, $x_i$.

Since \textcolor{black}{$\mathcal P_u$ and $\mathcal P_v$} appear in \eqref{eq:jumpVersionsOfPenaltyInequality} premultiplied by a vector, getting an analytic expression for each penalty function requires us to be able to write the remaining terms of the form $\left( \jump{f^\epsilon} - \statevec w^T\jump{\statevec f}\right)$ as some other vector premultiplied by $\jump{\statevec w}^T$. At the very least, we need to be able to factor the jump in the entropy flux, $ \jump{f^\epsilon}$, into the product of $\jump{\statevec w}^T$ and some vector.

The required factorization of the jump in the entropy flux can be found in the so-called Tadmor jump condition \cite{Tadmor:2003fv},
\begin{equation}
\jump{f^\epsilon} = \jump{\statevec w^T\statevec f} - \jump{\statevec w}^T\statevec f^{ec}.
\label{eq:TadmorJump0}
\end{equation}
The flux, $\statevec f^{ec}(\statevec u,\statevec v)$, called the ``entropy conserving flux", is generically defined in terms of a path integral in phase space. It is consistent with the conservative flux, $\statevec f$, meaning that when the arguments are equal,  $\statevec f^{ec}(\statevec u,\statevec u) = \statevec f(\statevec u)$. Closed form solutions have been derived for a variety of important systems of equations like the shallow water, Euler gas-dynamics, and MHD equations. See \ref{A:TadmorJump} for further discussion.

Starting from the first equation of \eqref{eq:jumpVersionsOfPenaltyInequality} with the equality, we substitute $\jump{f^\epsilon}$ using  \eqref{eq:TadmorJump0},
\begin{equation}
\beta\left( \jump{\statevec w^T\statevec f} - \jump{\statevec w}^T\statevec f^{ec}- \statevec w^T_v\jump{\statevec f}\right)+ \jump{\statevec w}^T\mathcal P_u= 0.
\end{equation}
Now,
\begin{equation}
\jump{\statevec w^T\statevec f} = \statevec w^T_u\statevec f(\statevec u) -  \statevec w^T_v\statevec f(\statevec v),
\end{equation}
so
\begin{equation}
\begin{split}
\jump{\statevec w^T\statevec f} - \statevec w^T_v\jump{\statevec f} &=  \statevec w^T_u\statevec f(\statevec u) -  \statevec w^T_v\statevec f(\statevec v) - \statevec w^T_v( \statevec f(\statevec u) - \statevec f(\statevec v))
\\&
=  \statevec w^T_u\statevec f(\statevec u)  -  \statevec w^T_v\statevec f(\statevec u) \\&= \jump{\statevec w}^T\statevec f(\statevec u),
\end{split}
\end{equation}
Replacing that gives
\begin{equation}
\beta\jump{\statevec w}^T\left( \statevec f(\statevec u)- \statevec f^{ec}  \right) +  \jump{\statevec w}^T\mathcal P_u= 0.
\end{equation}
We can then factor out the jump, and
\begin{equation}
\jump{\statevec w}^T\left(\beta\left( \statevec f(\statevec u)- \statevec f^{ec}  \right) +\mathcal P_u\ \right) = 0\quad \forall \jump{\statevec w}.
\end{equation}
Therefore,
\begin{equation}
\beta\left( \statevec f(\statevec u)- \statevec f^{ec}  \right) +\mathcal P_u = 0,
\end{equation}
which means that each directional component $ \mathcal P_u$ is a state vector
\begin{equation}
\mathcal P_u(\statevec u,\statevec v) = \beta\left( \statevec  f^{ec} - \statevec f(\statevec u)  \right).
\end{equation}
Similarly, 
\begin{equation}
\beta\left( \jump{f^\epsilon} - \statevec w^T_u\jump{\statevec f}\right)- \jump{\statevec w}^T\mathcal P_v= 0.
\end{equation}
This time,
\begin{equation}
\begin{split}
\jump{\statevec w^T\statevec f} - \statevec w^T_u\jump{\statevec f} &=  \statevec w^T_u\statevec f(\statevec u) -  \statevec w^T_v\statevec f(\statevec v) - \statevec w^T_u( \statevec f(\statevec u) - \statevec f(\statevec v))
\\&
=  \statevec w^T_u\statevec f(\statevec v)  -  \statevec w^T_v\statevec f(\statevec v) = \jump{\statevec w}^T\statevec f(\statevec v),
\end{split}
\end{equation}
from which it follows that
\begin{equation}
\mathcal P_v(\statevec u, \statevec v) = -\beta\left( \statevec  f^{ec} - \statevec f(\statevec v)  \right).
\end{equation}
Note that by consistency of the entropy conserving flux, $\mathcal P_v(\statevec v,\statevec v) = \beta(\statevec f(\statevec v) - \statevec f(\statevec v)) = 0$, and similarly for $\mathcal P_u$; the penalties vanish when the two solutions are the same, as required.

\subsection{Summary}\label{Sec:Summary}
We summarize the results of this section in the following theorem:
\begin{thm}
The overset domain problem, where
\begin{equation}
\begin{gathered}
(O)\quad \partial_t \statevec u + \vecNabla_x \cdot \bigstatevec f(\statevec u) + \mathcal L^b\left[\spacevec{\mathcal P}_{u}(\statevec u, \statevec v)\cdot\hat n_b\right] + \mathcal L^c\left[\spacevec{\mathcal P}_{u}(\statevec u, \statevec v)\cdot\hat n_c\right]= 0\quad \vec x\in \Omega_u\hfill\\
(B)\quad \partial_t \statevec v + \vecNabla_x \cdot \bigstatevec f(\statevec v) + \mathcal L^b\left[\spacevec{\mathcal P}_{v}(\statevec u, \statevec v)\cdot\hat n_b\right] + \mathcal L^c\left[\spacevec{\mathcal P}_{v}(\statevec u, \statevec v)\cdot\hat n_c\right]= 0\quad \vec x\in \Omega_v,\hfill
\end{gathered}
\end{equation}
is entropy conserving and conservative if
\begin{equation}
 \mathcal P_{i,u}(\statevec u, \statevec v) = \beta\left( \statevec  f_i^{ec}(\statevec u,\statevec v) - \statevec f_i(\statevec u)  \right)
 \label{eq:PuFinal}
\end{equation}
\begin{equation}
\mathcal P_{i,v}(\statevec u, \statevec v) = -\beta\left( \statevec  f_i^{ec}(\statevec u,\statevec v) - \statevec f_i(\statevec v)  \right),
\label{eq:PvFinal}
\end{equation}
where $\statevec f_i^{ec}(\statevec u,\statevec v)$ is an entropy conserving flux for the $i^{th}$ coordinate direction that satisfies the Tadmor jump condition, \eqref{eq:TadmorJump}.
\end{thm}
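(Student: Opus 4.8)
The plan is to read this theorem as the assembly of two facts already established: Theorem~\ref{thm:BoundedEntropyThm}, which states that the overset problem is entropy preserving exactly when $\mathcal B = 0$ on each of $\Gamma_b$ and $\Gamma_c$ (with $\mathcal B$ as in \eqref{eq:MultiDEntropyCondition} and $\spacevec\beta$ as in \eqref{eq:BetaCondition2D}), and the conservation analysis, which states that the overset problem is conservative exactly when the interface condition \eqref{eq:MultiDConservationCondition} holds there. Since $\spacevec{\mathcal P}_\bullet\cdot\hat n_\alpha$ and $\spacevec\beta$ carry the same unit normal on each interface curve, both conditions split across the coordinate directions, so it suffices to verify the scalar-$\beta$ versions \eqref{eq:ComponentConditions} with the proposed $\mathcal P_{i,u}$, $\mathcal P_{i,v}$ from \eqref{eq:PuFinal}--\eqref{eq:PvFinal}. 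The proof is then a short substitution for each, followed by a citation of Theorem~\ref{thm:BoundedEntropyThm} and of the conservation identity \eqref{eq:Conservation1}.

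First I would verify conservation. Adding the two directional penalties, the entropy conserving flux cancels, $\mathcal P_{i,u} + \mathcal P_{i,v} = \beta\bigl(\statevec f_i^{ec} - \statevec f_i(\statevec u)\bigr) - \beta\bigl(\statevec f_i^{ec} - \statevec f_i(\statevec v)\bigr) = -\beta\jump{\statevec f_i}$, so the componentwise conservation integrand $\beta\jump{\statevec f_i} + \mathcal P_{i,u} + \mathcal P_{i,v}$ vanishes identically; summing against $\hat n$ recovers \eqref{eq:MultiDConservationCondition} on both $\Gamma_b$ and $\Gamma_c$. Inserting this into \eqref{eq:BigMultiDConservation} collapses the interface integrals and yields \eqref{eq:Conservation1}: the weighted total quantity $\bar{\statevec u}_{\Omega_u} + \bar{\statevec v}_{\Omega_v} - \eta\bar{\statevec u}_{\Omega_O} - (1-\eta)\bar{\statevec v}_{\Omega_O}$ evolves only through the physical boundary fluxes on $\Gamma_a$ and $\Gamma_d$.

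Next I would verify entropy conservation, i.e. $\mathcal B = 0$. Substituting \eqref{eq:PuFinal}--\eqref{eq:PvFinal} into the componentwise entropy integrand and collecting the $\statevec w$-weighted terms gives
\[
\beta\jump{f^\epsilon} + \statevec w_u^T\mathcal P_{i,u} + \statevec w_v^T\mathcal P_{i,v} = \beta\left(\jump{f^\epsilon} + \jump{\statevec w}^T\statevec f_i^{ec} - \jump{\statevec w^T\statevec f_i}\right),
\]
since $\statevec w_u^T\bigl(\statevec f_i^{ec} - \statevec f_i(\statevec u)\bigr) - \statevec w_v^T\bigl(\statevec f_i^{ec} - \statevec f_i(\statevec v)\bigr) = \jump{\statevec w}^T\statevec f_i^{ec} - \jump{\statevec w^T\statevec f_i}$. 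The parenthesis is exactly the rearranged Tadmor jump condition \eqref{eq:TadmorJump0}, hence it is zero; so $\mathcal B = 0$ along $\Gamma_b$ and $\Gamma_c$, and Theorem~\ref{thm:BoundedEntropyThm} then gives entropy preservation, with \eqref{eq:MultiDEntropyBound1} holding as an equality. Consistency is immediate: since $\statevec f^{ec}(\statevecGreek\omega,\statevecGreek\omega) = \statevec f(\statevecGreek\omega)$, the penalties \eqref{eq:PuFinal}--\eqref{eq:PvFinal} vanish when $\statevec u = \statevec v = \statevecGreek\omega$, recovering the single-domain equation.

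I do not expect a deep obstacle, because the penalty functions were constructed precisely to solve \eqref{eq:ComponentConditions} with equality; the one step that needs care is the passage from the vector inequalities \eqref{eq:sigmaMatricesConditions} to a pointwise prescription for $\mathcal P_u$ and $\mathcal P_v$. There one uses the absorption identities $\jump{\statevec w^T\statevec f_i} - \statevec w_v^T\jump{\statevec f_i} = \jump{\statevec w}^T\statevec f_i(\statevec u)$ and $\jump{\statevec w^T\statevec f_i} - \statevec w_u^T\jump{\statevec f_i} = \jump{\statevec w}^T\statevec f_i(\statevec v)$ (each a one-line expansion of $\jump{\statevec w^T\statevec f_i} = \statevec w_u^T\statevec f_i(\statevec u) - \statevec w_v^T\statevec f_i(\statevec v)$) to expose a common factor $\jump{\statevec w}^T$, and then observes that forcing the bracketed vector to vanish is a \emph{sufficient} choice (not the unique one when $p>1$), which is all the theorem asserts. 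Combining the conservation identity with $\mathcal B = 0$ on the interfaces then proves the stated result.
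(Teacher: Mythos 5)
Your proposal is correct and follows essentially the same route as the paper's proof: verify the conservation condition \eqref{eq:MultiDConservationCondition} by direct cancellation of $\statevec f^{ec}$ in the sum $\mathcal P_{i,u}+\mathcal P_{i,v}$, then verify $\mathcal B=0$ componentwise by the identity $\statevec w_u^T(\statevec f_i^{ec}-\statevec f_i(\statevec u))-\statevec w_v^T(\statevec f_i^{ec}-\statevec f_i(\statevec v))=\jump{\statevec w}^T\statevec f_i^{ec}-\jump{\statevec w^T\statevec f_i}$ together with the Tadmor jump condition, and invoke Theorem~\ref{thm:BoundedEntropyThm}. Your additional remarks on consistency and on the non-uniqueness of the penalty choice are correct but not needed for the theorem as stated.
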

\begin{proof}
We first show conservation. When we multiply each component in \eqref{eq:PuFinal} and \eqref{eq:PvFinal} by a component of a unit vector along any of the interface boundaries and add the results,
\begin{equation}
\mathcal P_u + \mathcal P_v = -\spacevec\beta\cdot(\bigstatevec f(\statevec u) - \bigstatevec f(\statevec v)),
\end{equation}
which is the conservation condition, \eqref{eq:MultiDConservationCondition}/\eqref{eq:MultiDConservationConditionCopy}.

To show that entropy is preserved, we show \eqref{eq:MultiDEntropyConditionCopy} with the equality, using the Tadmor jump condition, \eqref{eq:TadmorJump}. For each component,
\begin{equation}
\begin{split}
&\beta\left(f_i^\epsilon(\statevec u) - f_i^\epsilon(\statevec v)\right) + \statevec w^T_u \mathcal P_{i,u} +\statevec w^T_v \mathcal P_{i,v} 
\\&=\beta\left\{\jump{f_i^\epsilon} + \statevec w^T_u \left( \statevec  f_i^{ec}(\statevec u,\statevec v) - \statevec f_i(\statevec u)  \right) -\statevec w^T_v \left( \statevec  f_i^{ec}(\statevec u,\statevec v) - \statevec f_i(\statevec v)  \right) \right\}
\\& =\beta\left\{\jump{f_i^\epsilon} +\jump{\statevec w}^T\statevec f_i^{ec} - \jump{\statevec w^T\statevec f_i}   \right\}
\\& = 0.
\end{split}
\end{equation}
Then \eqref{eq:MultiDEntropyCondition}/\eqref{eq:MultiDEntropyConditionCopy} holds for any $\spacevec \beta = \beta\hat n$, and by Thm. \ref{thm:BoundedEntropyThm}, the formulation is entropy conserving.
\end{proof}

The penalty terms are therefore proportional to the difference between an entropy conserving flux for the specific system of equations and the conservative flux, rather than being directly proportional to the difference in the states, as was the case for linear problems considered in Paper I \cite{KOPRIVA2022110732}. In the case of the Burgers equation, however, we show in the following example that the penalty terms can be written that way, but with a solution-dependent coefficient that depends on the two states.

\subsection{Example}
The Burgers equation in one space dimension is
\begin{equation}
\textcolor{black}{\partial_t\omega}+ \partial_x\left(\frac{1}{2}\omega^2\right)=0.
\end{equation}
We pose the problem on the one-dimensional domains shown in Fig. \ref{fig:OSet1D}. 
\begin{figure}[tbp] 
   \centering
   \includegraphics[width=4in]{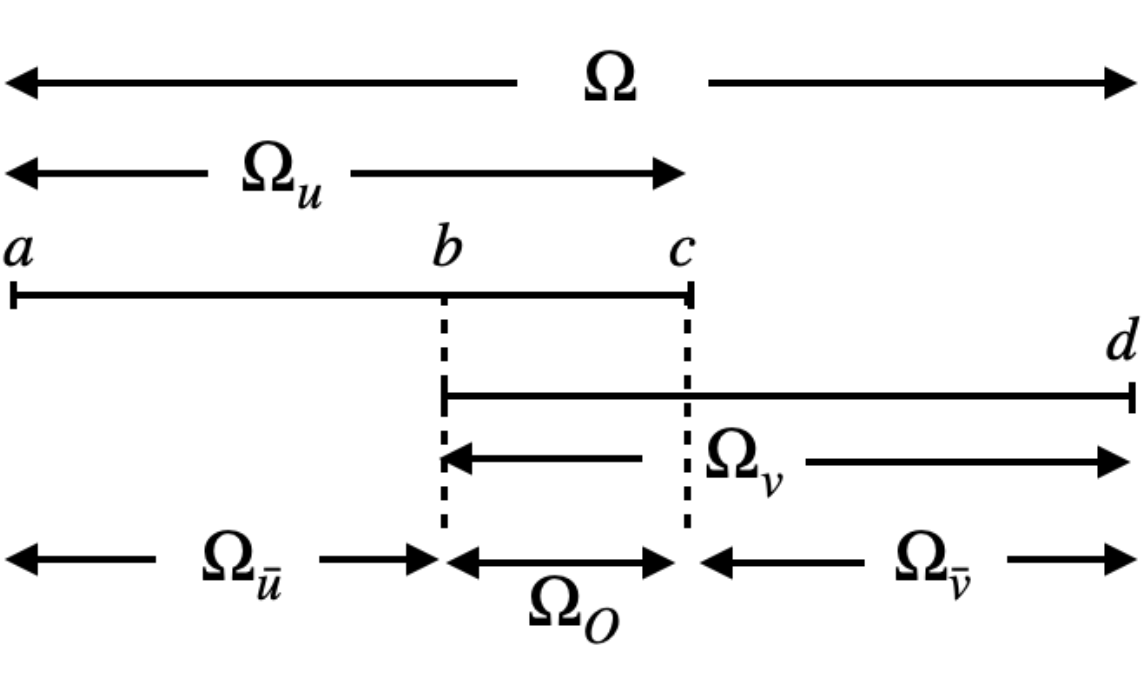} 
   \caption{The overset domain problem in one space dimension}
   \label{fig:OSet1D}
\end{figure}

The Burgers equation has an entropy pair $\left(s,f^\epsilon\right) = \left(\oneHalf\omega^2,\frac{1}{3}\omega^3\right)$, conservative flux $f = \frac{1}{2}\omega^2$, and entropy variable $w = \partial s/\partial \omega = \omega$. 
The symmetric and consistent entropy conserving flux for this entropy that satisfies the Tadmor jump condition is
\begin{equation}
f^{ec}(u,v) = \frac{1}{6}\left( u^2 + uv + v^2\right),
\end{equation}
which can be verified by substitution.

For the one-dimensional geometry of Fig. \ref{fig:OSet1D}, $\hat n_b = -\hat x$ and  $\hat n_c = \hat x$, so
\begin{equation}
\beta = \left\{
\begin{gathered}
\eta, \quad x=b \hfill\\ 
(1-\eta),\quad x=c.
\end{gathered}
   \right.
\label{eq:BetaCondition1D}
\end{equation}
Then the penalty term $\mathcal P_u$ is
\begin{equation}
\begin{split}
\mathcal P_u(u,v) &= \beta\left( f^{ec}(u,v) - f(u)\right)
\\&= \frac{\beta}{6}\left( u^2 + uv + v^2\right) - \beta\oneHalf u^2 
\\& =  \frac{\beta}{6}\left( -2u^2 + uv + v^2\right)
\\& = - \frac{\beta}{6}\left( \left(u^2 - v^2 \right) + u(u-v) \right)
\\& = - \frac{\beta}{6}(2u+v)(u-v).
\end{split}
\label{eq:PB1}
\end{equation}
Explicitly, the penalty vanishes when the jump in the solutions is zero, as required.

Similarly,
\begin{equation}
\begin{split}
\mathcal P_v(u,v) &= -\frac{\beta}{6}\left( u^2 + uv + v^2\right) +\beta \oneHalf v^2 
\\& = - \frac{\beta}{6}\left( u^2 + uv -2 v^2\right)
\\& = - \frac{\beta}{6}\left( \left(u^2 - v^2 \right) + v(u-v) \right)
\\& = - \frac{\beta}{6}(u+2v)(u-v).
\end{split}
\label{eq:PB2}
\end{equation}
The Burgers equation penalties can be written, then, in the form $\sigma(u,v)(u-v)$, and is linear in the jump.

\begin{rem} 
For the scalar Burgers equation, one can get the same penalty functions directly from \eqref{eq:jumpVersionsOfPenaltyInequality}, without the explicit need for the jump condition or entropy conserving flux, since it is easy to factor the jump directly using standard polynomial factorization formulas.
\end{rem}

The entropy preserving and conservative overset domain equations for the Burgers equation can therefore be written as
\begin{equation}
\begin{gathered}
 \partial_t  u + \partial_x  f( u) - \mathcal L^b_u\left[ \frac{\eta}{6}(2u+v)(u-v) \right] -
 \mathcal L^c_u\left[ \frac{1-\eta}{6}(2u+v)(u-v)\right] = 0,\quad x\in\Omega_u\hfill\\
  \partial_t  v + \partial_x  f( v) - \mathcal L^b_v\left[  \frac{\eta}{6}(u+2v)(u-v) \right] -
 \mathcal L^c_u\left[  \frac{1-\eta}{6}(u+2v)(u-v) \right] = 0,\quad x\in\Omega_v\hfill
 \end{gathered}
 \label{eq:OversetDomainBurgers1D}
\end{equation}
for $0<\eta<1$.

In one space dimension, the lifting operator reduces to a point,
\begin{equation}
    \int_I \statevecGreek\psi^T\mathcal L^\alpha[\statevecGreek\phi]dx = \left.\statevecGreek\psi^T\statevecGreek\phi\right|_\alpha, 
\end{equation}
so in weak form,
\begin{equation}
\begin{gathered}
\iprod{\phi_u, \partial_t  u}_{\Omega_u} + \iprod{\phi_u,\partial_x  f( u)} _{\Omega_u}
- \left[\phi_u \frac{\eta}{6}(2u+v)(u-v) \right]_b -
\left[\phi_u \frac{1-\eta}{6}(2u+v)(u-v)\right] _c = 0\hfill\\
\iprod{\phi_v,  \partial_t  v}_{\Omega_v} + \iprod{\phi_v,\partial_x  f( v)}_{\Omega_v} 
-  \left[ \phi_v \frac{\eta}{6}(u+2v)(u-v) \right]_b -
\left[\phi_v\frac{1-\eta}{6}(u+2v)(u-v)\right]_c = 0.\hfill\\
 \end{gathered}
 \label{eq:GeneralWeakForm}
\end{equation}

\section{Interior Coupling and Dissipation}

One can optionally couple the solutions further and add dissipation in the overlap region by adding internal penalty terms \textcolor{black}{as an entropy bounded way to enforce volume coupling}.  Unlike the interface penalties, the linear analysis of Paper I applies to the interior penalties for nonlinear problems. The only difference is that the penalties are now proportional to the entropy variables rather than the state variables. For that reason we include only an outline here.

We add linear penalties at any set of points, $\{\spacevec x^m\}_{m=1}^M$, located anywhere in $\Omega_O$, to the overset problem \eqref{eq:TwoEquationStrongForm}, the additions being
\begin{equation}
\begin{gathered}
\frac{1}{M}\sum_{m=1}^M\mathcal L_u^m \left[ \mmatrix{$\Sigma$}_u^m (\statevec w_u-\statevec w_v)\right],\quad\spacevec x\in\Omega_u \hfill\\
  \frac{1}{M}\sum_{m=1}^M\mathcal L_v^m \left[ \mmatrix{$\Sigma$}_v^m (\statevec w_v-\statevec w_u)\right],\quad\spacevec x\in\Omega_v,\hfill
 \end{gathered}
 \label{eq:OversetDomainProblem1DPwInt}
\end{equation}
where $\mmatrix{$\Sigma$}_{u,v}>0$ are positive definite matrices, which can be constant, and $\mathcal L_{u,v}^m$ are lifting operators that select the value at $\spacevec x^m$.

 \textcolor{black}{Then the strong form, \eqref{eq:TwoEquationStrongForm}, of the system is amended to
\begin{equation}
\begin{gathered}
(O)\quad \partial_t \statevec u + \vecNabla_x \cdot \bigstatevec f(\statevec u) + \mathcal L^b\left[\mathcal P^b_{u}(\statevec u, \statevec v)\right] + \mathcal L^c\left[\mathcal P^c_{u}(\statevec u, \statevec v)\right] + \frac{1}{M}\sum_{m=1}^M\mathcal L_u^m \left[ \mmatrix{$\Sigma$}_u^m (\statevec w_u-\statevec w_v)\right]= 0\quad \vec x\in \Omega_u\hfill\\
(B)\quad \partial_t \statevec v + \vecNabla_x \cdot \bigstatevec f(\statevec v) + \mathcal L^b\left[\mathcal P^b_{v}(\statevec u, \statevec v)\right] + \mathcal L^c\left[\mathcal P^c_{v}(\statevec u, \statevec v)\right] +  \frac{1}{M}\sum_{m=1}^M\mathcal L_v^m \left[ \mmatrix{$\Sigma$}_v^m (\statevec w_v-\statevec w_u)\right]= 0\quad \vec x\in \Omega_v,\hfill
\end{gathered}
\label{eq:TwoEquationStrongFormWithInteriorPenalty}
\end{equation}
while the weak forms become
\begin{equation}
\begin{gathered}
\begin{split}
(O)\quad \iprod{\statevecGreek\phi_u,\partial_t \statevec u} +\iprod{\statevecGreek\phi_u,\vecNabla_x \cdot \bigstatevec f(\statevec u)} 
&+ \int_{\Gamma_b}\statevecGreek\phi_u^T\mathcal P^b_{u}(\statevec u, \statevec v)\dS
+ \int_{\Gamma_c}\statevecGreek\phi_u^T\mathcal P^c_{u}(\statevec u, \statevec v)\dS \\&+ \frac{1}{M}\sum_{m=1}^M \left[ \mmatrix{$\Sigma$}_u^m (\statevec w_u-\statevec w_v)\right]_{\spacevec x_m}= 0\quad \vec x\in \Omega_u
\end{split}\hfill\\
\begin{split}
(B)\quad  \iprod{\statevecGreek\phi_v,\partial_t \statevec v} + \iprod{\statevecGreek\phi_v,\vecNabla_x \cdot \bigstatevec f(\statevec v)} &+ \int_{\Gamma_b}\statevecGreek\phi_v^T\mathcal P^b_{v}(\statevec u, \statevec v)\dS + \int_{\Gamma_c}\statevecGreek\phi_v^T\mathcal P^c_{v}(\statevec u, \statevec v)\dS\\&+  \frac{1}{M}\sum_{m=1}^M \left[ \mmatrix{$\Sigma$}_v^m (\statevec w_v-\statevec w_u)\right]_{\spacevec x_m}= 0\quad \vec x\in \Omega_v.
\end{split}\hfill
\end{gathered}
\label{eq:TwoEquationWeakFormWithInteriorPenalty}
\end{equation}
}

\textcolor{black}{Getting the entropy bound then follows the procedure used in Sec. \ref{EntropyBoundednessSection}.
When one replaces the test functions $\statevecGreek\phi$ with the entropy variables, and uses  the interior penalty terms derived in Sec. \ref{Sec:Summary}, the entropy equation becomes
\begin{equation}
\frac{d \bar S}{dt} + \mathcal D\le -\left\{\int_{\Gamma_a}  \spacevec f^\epsilon(\statevec u) \cdot\hat n_a\dS  + \int_{\Gamma_d} \spacevec f^\epsilon(\statevec v) \cdot\hat n_d\dS \right\},
\label{eq:MultiDEntropyBound2}
\end{equation}
where}
\begin{equation}
\mathcal D \equiv  \frac{1}{M}\sum_{m=1}^M \mathcal P^m= \frac{1}{M}\sum_{m=1}^M\left\{(1-\eta) \statevec w_u^T\mmatrix{$\Sigma$}_u^m (\statevec w_u-\statevec w_v) + \eta\statevec w_v^T\mmatrix{$\Sigma$}_v^m (\statevec w_v-\statevec w_u)\right\}_{\vec x^m}.
\end{equation}
\textcolor{black}{To ensure that the total entropy is bounded, then, we require that each term in $\mathcal D$ is non-negative, that is}, $\mathcal P^m$ satisfies
\begin{equation}
\mathcal P^m = (1-\eta) \statevec w_u^T\mmatrix{$\Sigma$}_u^m (\statevec w_u-\statevec w_v) + \eta\statevec w_v^T\mmatrix{$\Sigma$}_v^m (\statevec w_v-\statevec w_u) \ge 0.
\label{eq:InteriorPenaltyCondition}
\end{equation}
Eq. \eqref{eq:InteriorPenaltyCondition} is identical in form to the linear case, with entropy variables replacing state variables. From the analysis of Sec. 3.2 of Paper I, \textcolor{black}{the condition on the penalty matrices is}
\begin{equation}
(1-\eta)\mmatrix{$\Sigma$}^m_u = \eta\mmatrix{$\Sigma$}_v^m.
\label{eq:InteriorPenaltyCoupling}
\end{equation}
With this condition holding,
\begin{equation}
\mathcal P^m = (1-\eta)(\statevec w_u - \statevec w_v)^T\mmatrix{$\Sigma$}^m_u (\statevec w_u - \statevec w_v)\ge 0,
\end{equation}
provided that the penalty matrices are chosen to be  positive definite. \textcolor{black}{Then \eqref{eq:MultiDEntropyBound1} follows from \eqref{eq:MultiDEntropyBound2} and the formulation is entropy conserving if all $\mathcal P^m = 0$, and entropy bounded otherwise.} The relationship \eqref{eq:InteriorPenaltyCoupling} between the penalty matrices also ensures conservation \cite{KOPRIVA2022110732}.

\begin{rem}
This section shows how to add dissipation to the results of the previous section by adding such penalties at the interface points alone. This is equivalent to the creation of entropy dissipative fluxes from entropy conserving ones. See \cite{Winters2021}, or \cite{Ranocha2017},\cite{Winters2016} for examples.
\end{rem}

\section{Discussion and Conclusion}

\textcolor{black}{We have derived entropy preserving and entropy dissipative formulations of the overset domain problem for nonlinear hyperbolic conservation laws. All that is necessary is that the system has a convex entropy and entropy flux associated with it. As for linear problems studied in Paper I, two-way coupling is necessary, and in this formulation the two-way coupling is enforced by penalty terms in both domains along the artificial interior boundaries. With the required two-way interface coupling and the optional volume coupling, the overset domain problems are provably conservative. Finally, overset domain equations satisfy the same global entropy bound and are consistent with the original single domain problem they represent. }

\textcolor{black}{Unlike the penalties for linear problems, the penalties for nonlinear problems are not necessarily linear in the state or entropy variables. The interface penalties were shown to depend conveniently on the difference between the conservative flux and a two-point entropy flux that satisfies the Tadmor jump condition. Closed form versions of these two-point fluxes exist for numerous systems of equations, including the Euler gas-dynamics equations, making the penalties straightforward to express for those systems. Otherwise, a representation of these fluxes as integrals in phase space always exists \cite{Tadmor:2003fv}. Entropy dissipation and additional overlap coupling between the domains can be added through penalties that are linear, but in terms of the difference between the entropy variables from the overset domains rather than the state variables. }

\textcolor{black}{The overlapping domain equations serve to match the solutions of the original PDE system on the original domain sans overlaps, and are independent of the methods used to approximate them. Operationally, strong form approximations such as finite difference methods would start by approximating  \eqref{eq:TwoEquationStrongFormWithInteriorPenalty} in the most general form. Such approximations would require consistent interpolation operators on the finite difference grids and discrete lifting operators to evaluate them along the interior interfaces. Finite or spectral element methods would start from \eqref{eq:TwoEquationWeakFormWithInteriorPenalty}. Interpolation is well defined in those approximations, but the integrals along the interior boundary curves would need to be approximated in a stable way. Ultimately, the design of a stable numerical overset grid method must satisfy the same entropy bound as shown for the PDEs, and those that don't cannot be stable. As far as we know, there are no overset grid approximations today in more than one space dimension that approximate boundary value problems for hyperbolic systems that are well-posed or entropy bounded like those presented here. }

\textcolor{black}{The new entropy conservative/bounded overset domain problems are flexible in that they include parameters that can be chosen or adapted as desired. The domain weighting parameter, $0<\eta<1$, for instance, can be chosen to favor one grid over the other, which may be useful if one grid is known to have a more accurate solution. Entropy dissipation can be added and controlled through linear penalty terms added at the artificial interface boundaries (as is done with entropy stable Riemann solvers) or at arbitrary points in the interior of the overlap region. This gives the flexibility to tune or adapt, for instance, the amount of entropy dissipation in the PDE system according to the solutions through the size of the penalty matrices, as desired.
}
\appendix
\section{The Tadmor Jump Condition}\label{A:TadmorJump}

For completeness, we review the derivation of the Tadmor jump condition \cite{Tadmor:2003fv} for an entropy conserving numerical flux function, $\statevec f^{ec}$. We start with the one dimensional hyperbolic system with periodic boundary conditions
\begin{equation}
\partial_t\statevecGreek\omega + \partial_x\,\statevec{f}(\statevecGreek\omega) = 0,
\end{equation}
equipped with an entropy pair $(s,f^\epsilon)$ . We then write its conservative finite difference approximation in space
\begin{equation}
\frac{\partial}{\partial t} {\statevecGreek\omega}_j + \frac{\statevec{f}^*_{j+1/2} - \statevec{f}^*_{j-1/2}}{\Delta x} = 0,\quad j\in\mathbb{Z},
\end{equation}
where the numerical flux, $\statevec{f}^*_{j+1/2} = \statevec{f}^*_{j+1/2}( {\statevecGreek\omega}_{j+1}, {\statevecGreek\omega}_j)$, is the continuous flux with the consistency property $\statevec{f}^*_{j+1/2}(\statevecGreek\omega,\statevecGreek\omega) = \statevec{f}(\statevecGreek\omega)$. The quantity $\Delta x$ is the width of the grid cells, and $ {\statevecGreek\omega}_j$ is the discrete value at cell $j$.

Next, we introduce the discrete entropy variable $ {\statevec{w}}_j := \frac{\partial s}{\partial\statevecGreek\omega}( {\statevecGreek\omega}_j)$ and contract the conservative finite difference approximation to mimic the continuous entropy analysis 
\begin{equation}
 {\statevec{w}}_j^T\,\frac{\partial}{\partial t} {\statevecGreek\omega}_j +  {\statevec{w}}_j^T\,\frac{\statevec{f}^*_{j+1/2} - \statevec{f}^*_{j-1/2}}{\Delta x} = 0,\quad j\in\mathbb{Z}.
 \label{eq:A3}
\end{equation}

Assuming time continuity, the first term in \eqref{eq:A3} becomes
\begin{equation}
 {\statevec{w}}_j^T\,\frac{\partial}{\partial t} {\statevecGreek\omega}_j = \frac{\partial}{\partial t}\, {s}_j.
\end{equation}
\textcolor{black}{
The second term in \eqref{eq:A3}, after being multiplied by $\Delta x$, is recast as 
\begin{equation}
\begin{split}
 {\statevec{w}}^T_j\,(\statevec{f}^*_{j+1/2} - \statevec{f}^*_{j-1/2}) &=  (f^{*,\epsilon}_{j+1/2} - f^{*,\epsilon}_{j-1/2} ) - \frac{1}{2}\,(r_{j+1/2} + r_{j-1/2}),
\end{split}
\label{eq:RecastContraction}
\end{equation}
where $f^{*,\epsilon}_{j\pm1/2}$ are consistent numerical entropy fluxes, i.e. ${f}^{*,\epsilon}_{j+1/2} = {f}^{*,\epsilon}_{j+1/2}( {\statevecGreek\omega}_{j+1}, {\statevecGreek\omega}_j)$ and ${f}^{*,\epsilon}_{j+1/2}(\statevecGreek\omega,\statevecGreek\omega) = {f}^{*,\epsilon}(\statevecGreek\omega)$. The last term on the right is an entropy production term, and we get the desired contraction of the fluxes when we find $\statevec{f}^*$ and $f^\epsilon$ so that $r_{j\pm 1/2} = 0$.}

\textcolor{black}{
To find the residual quantities, we define the usual entropy flux potential \cite{Tadmor:2003fv},
\begin{equation}
 {\psi}=  {\statevec{w}}^T\,\statevec{f}(\statevecGreek\omega( {\statevec{w}})) - f^{\epsilon}(\statevecGreek\omega( {\statevec{w}})).
 \label{eq:EntropyFluxPotiential}
\end{equation}
Tadmor writes the numerical entropy flux in terms of this potential,
\begin{equation}
f^{*,\epsilon}_{j+1/2} = \avg{ {\statevec{w}}^T}_{j+1/2}\,\statevec{f}^{*}_{j+1/2} - \avg{ {\psi}}_{j+1/2},
\label{eq:NEFwithPsi}
\end{equation}
where $\avg{\cdot}$ is the average operator, e.g. $\avg{\psi}_{j+1/2} = (\psi_j + \psi_{j+1})/2$. This numerical entropy flux is consistent since it reduces to $f^{\epsilon}$ when the jumps are zero.}

\textcolor{black}{
If we substitute the numerical entropy flux \eqref{eq:NEFwithPsi} into \eqref{eq:RecastContraction}, and rearrange,
\begin{equation}
\begin{split}
\left( \statevec w^T_j - \avg{\statevec w^T}_{j+1/2}\right)\statevec{f}^{*}_{j+1/2} - &\left( \statevec w^T_j - \avg{\statevec w^T}_{j-1/2}\right)\statevec{f}^{*}_{j-1/2} \\&= -\avg{\psi}_{j+1/2} + \avg{\psi}_{j-1/2}  - \frac{1}{2}\,(r_{j+1/2} + r_{j-1/2}).
\end{split}
\end{equation}
If we define the jump operator $\jump{\statevec w}_{j+1/2} \equiv \statevec w_{j+1} - \statevec w_{j}$ and gather terms, then
\begin{equation}
-\oneHalf  \jump{\statevec w^T}_{j+1/2}\statevec{f}^*_{j+1/2} - \oneHalf\jump{\statevec w^T}_{j-1/2}\statevec{f}^*_{j-1/2}+ \avg{\psi}_{j+1/2}  -\avg{\psi}_{j-1/2}=- \frac{1}{2}\,(r_{j+1/2} + r_{j-1/2})
\end{equation}
Now,
\begin{equation}
\begin{split}
\avg{\psi}_{j+1/2}  -\avg{\psi}_{j-1/2} &= \frac{\psi_{j+1}+\psi_j}{2} - \frac{\psi_j +\psi_{j-1}}{2} =\frac{\psi_{j+1}-\psi_j}{2} + \frac{\psi_j -\psi_{j-1}}{2} \\&=\oneHalf \jump{\psi}_{j+1/2} + \oneHalf\jump{\psi}_{j-1/2},
\end{split}
\end{equation}
so
\begin{equation}
\oneHalf\left(-  \jump{\statevec w^T}_{j+1/2}\statevec{f}^*_{j+1/2} +  \jump{\psi}_{j+1/2}\right)+ \oneHalf\left(-\jump{\statevec w^T}_{j-1/2}\statevec{f}^*_{j-1/2}
 + \jump{\psi}_{j-1/2}\right)
=- \frac{1}{2}\,(r_{j+1/2} + r_{j-1/2})
\label{eq:LRMatchEquation}
\end{equation}
We now match the left and right sides of \eqref{eq:LRMatchEquation} to find that
\begin{equation}
r_{j+1/2} = \jump{ {\statevec{w}}^T}_{j+1/2}\,\statevec{f}^*_{j+1/2} - \jump{ {\psi}}_{j+1/2},
\end{equation}
with the equation for $r_{j-1/2}$ having just an integer shift to the left. }

\textcolor{black}{
As mentioned above, one wants the entropy production terms to vanish, $r_{j\pm1/2}=0$, which happens if 
  \begin{equation}
r_{j\pm 1/2}  = 0 = \jump{\statevec{w}}_{j\pm 1/2}^T\statevec{f}^*_{j\pm 1/2} - \jump{\psi}_{j\pm 1/2},
 \end{equation}
i.e.
 \begin{equation}
\jump{\statevec{w}}_{j\pm 1/2}^T\statevec{f}^*_{j\pm 1/2} = \jump{\psi}_{j\pm 1/2}.
\label{eq:ZeroProduction}
 \end{equation}}

\textcolor{black}{
Going back to \eqref{eq:EntropyFluxPotiential}, we can construct the averages
\begin{equation}
\jump{\psi}_{j\pm 1/2} = \jump{\statevec w^T \statevec f}_{j\pm 1/2} - \jump{f^\epsilon}_{j\pm 1/2}.
\end{equation}
Therefore, combining with \eqref{eq:ZeroProduction},
 \begin{equation}
\jump{\statevec{w}}_{j\pm 1/2}^T\statevec{f}^*_{j\pm 1/2} = \jump{\statevec w^T \statevec f}_{j\pm 1/2} - \jump{f^\epsilon}_{j\pm 1/2}.
\label{eq:ap:tadmor}
 \end{equation}
 }

Tadmor \cite{Tadmor:2003fv} defined numerical flux functions $\statevec f^{ec}$ for $\statevec{f}^*$ that satisfy \eqref{eq:ap:tadmor} generally via a path integral in entropy phase space, and so they can always be found, in principle. Although it is relatively straightforward to solve this path integral for the scalar Burgers equation, i.e.
\begin{equation}
\statevec f^{ec}_{j+1/2} = \frac{ {\omega}_j^2 +  {\omega}_j\, {\omega}_{j+1} +  {\omega}^2_{j+1}}{6},
\end{equation}
it has been only recently that closed form analytical expressions of entropy conserving fluxes for complex system of conservations laws such as the shallow water equations \cite{Gassner2016,wintermeyer2017entropy}, the compressible Euler equations \cite{Ismail2009,Ranocha2017}, and the ideal MHD equations \cite{Chandrashekar2016,Winters2016} have been derived.

Once derived, \eqref{eq:ap:tadmor} is a purely algebraic condition dependent only on the entropy variables and potential, independent of any numerical scheme. As such, the entropy conserving flux $\statevec f^{ec}$ satisfies the jump relation for any two arbitrary states and can can be written generally as 
\begin{equation}
\jump{f^\epsilon} = \jump{\statevec w^T\statevec f} - \jump{\statevec w}^T\statevec f^{ec}.
\label{eq:TadmorJump}
\end{equation}

\section*{Acknowledgments}

 This work was supported by a grant from the Simons Foundation (\#426393, David Kopriva).  Gregor Gassner thanks the Klaus-Tschira Stiftung and
the European Research Council for funding through the ERC Starting Grant “An
Exascale aware and Un-crashable Space-Time-Adaptive Discontinuous Spectral
Element Solver for Non-Linear Conservation Laws” (EXTREME, project no. 71448). Jan Nordstr\"om was supported by Vetenskapsrådet, Sweden grant  nr: 2018-05084 VR, 2021-05484 VR  and the Swedish e-Science Research Center (SeRC). 

\bibliographystyle{plain}
\bibliography{OversetNonlinear.bib}

\end{document}